\newtheorem{theorem}{Theorem}
\newtheorem{lemma}[theorem]{Lemma}
\newtheorem{prop}[theorem]{Proposition}
\theoremstyle{definition}
\newtheorem{defn}[theorem]{Definition}
\theoremstyle{definition}
\newtheorem{remark}[theorem]{Remark}
\numberwithin{theorem}{section}
\definecolor{dkgreen}{rgb}{0,0.6,0}
\definecolor{gray}{rgb}{0.5,0.5,0.5}
\definecolor{mauve}{rgb}{0.58,0,0.82}
\tiny\color{gray},
\journal{Physica D: Nonlinear Phenomena}
\begin{document}

\begin{frontmatter}



\title{Generalised Synchronisations, Embeddings, and Approximations for Continuous Time Reservoir Computers}


\author[inst1]{Allen G Hart}

\affiliation[inst1]{organization={University of Bath},
            addressline={Claverton Down}, 
            city={Bath},
            postcode={BA2 7AY}, 
            country={UK}}

\begin{abstract}
We establish conditions under which a continuous time reservoir computer, such as a leaky integrator echo state network, admits a generalised synchronisation $f$ between between the source dynamics and reservoir dynamics. We show that multiple generalised synchronisations can exist simultaneously, and connect this to the multi-Echo-State-Property (multi-ESP). In the special case of a linear reservoir computer, we derive a closed form expression for the generalised synchronisation $f$. Furthermore, we establish conditions under which $f$ is of class $C^1$, and conditions under which $f$ is a topological embedding on the fixed points of the source system. This embedding result is closely related to Takens' embedding Theorem.

We also prove that the embedding of fixed points occurs almost surely for randomly generated linear reservoir systems. With an embedding achieved, we discuss how the universal approximation theorem makes it possible to forecast the future dynamics of the source system and replicate its topological properties. We illustrate the theory by embedding a fixed point of the Lorenz-63 system into the reservoir space using numerical methods. Finally, we show that if the observations are perturbed by white noise, the GS is preserved up to a perturbation by an Ornstein-Uhlenbeck process.
\end{abstract}


\begin{keyword}
Generalised Synchronisation \sep Reservoir Computing
\PACS 0000 \sep 1111
\MSC 0000 \sep 1111
\end{keyword}

\end{frontmatter}

\section{Introduction}
\subsection{Reservoir Computing}

After the seminal papers of \cite{Jaeger2001} and \cite{doi:10.1162/089976602760407955} at the turn of the millennium, reservoir computers have grown in popularity, and are now widely studied in mathematics \cite{GRIGORYEVA2018495, HART2021132882, Gonon2020, CENI2020132609} physics \cite{Inubushi2017, TANAKA2019100}, computer science and robotics \cite{10.1007/978-3-540-25940-4_14, TANAKA2019100}. Authors often use a reservoir map $F : \mathbb{R}^N \times \mathbb{R}^d \to \mathbb{R}^N$ to analyse a discrete time series $\{ z_k \in \mathbb{R}^d \}_{k \in \mathbb{Z}}$ by creating reservoir states $\{ x_k \in \mathbb{R}^N \}_{k \in \mathbb{Z}}$ via the iteration
\begin{align*}
    x_{k+1} = F(x_k,z_k)
\end{align*}
for some initial state $x_0 \in \mathbb{R}^N$. Having created the reservoir states $\{ x_k \in \mathbb{R}^N \}_{k \in \mathbb{Z}}$ the practitioner may train the reservoir system to approximate a series of targets $\{ u_k \in \mathbb{R}^s \}_{k \in \mathbb{Z}}$ by optimising a set of parameters $W$ such that a map (often a neural network) parametrised by $W$ approximately maps the reservoir states $x_k$ to the targets $u_k$.

A popular choice of reservoir map is the Echo State Network (ESN) \cite{Jaeger2001,Jaegar2002} of the form
\begin{align*}
    F(x,z) := \sigma(Ax + Cz + b)
\end{align*}
where
\begin{itemize}
    \item $\sigma : \mathbb{R}^N \to \mathbb{R}^N$ is an activation function  
    \item $A$ is a random real square $N \times N$ matrix, called the reservoir matrix
    \item $C$ is a random real $N \times d$ matrix, called the input matrix
    \item $b$ is a random real $N$-vector, called the bias.
\end{itemize}
The targets $u_k$ are often approximated by minimising over the $N \times s$ matrices $W$ the Tikhonov regularised least squares
\begin{align*}
    L(W) = \sum_{k=1}^{\ell} \lVert W^{\top}x_k - u_k \rVert^2 + \lambda \lVert W \rVert^2
\end{align*}
for $L$ the loss function, $\lambda > 0$ the regularisation parameter, and $\ell$ the finite number of training points.

ESNs possess the universal approximation property \cite{GRIGORYEVA2018495, GONON202110, embedding_and_approximation_theorems}, which allows them to approximate arbitrary relationships between the reservoir states $x_k$ and targets $u_k$. ESNs have proved themselves competitive in forecasting chaotic time series \citep{Jaeger78} where the targets $u_k = z_k$ are the observations. 
A close relative of the ESN is the leaky integrator ESN \cite{JAEGER2007335,lun2015novel,lun2019modified}
\begin{align*}
    F(x,z) := - \alpha x + \sigma(Ax + Cz + b)
\end{align*}
where $\alpha > 0$ represents the information  `leak'. The leaky integrator ESN is usually applied to continuous time series $z : \mathbb{R} \to \mathbb{R}^d$ instead of discrete series $\{ z_k \}_{k \in \mathbb{Z}}$. When dealing with a continuous time series, we generate a continuous stream of reservoir states by integrating the non-autonomous ODE
\begin{align}
    \dot{x}(t) = F(x(t),z(t))
    \label{informal_system}
\end{align}
from initial condition $x_0 \in \mathbb{R}^N$. The continuous time case is especially interesting when modelling physical reservoir computers; which are implemented on physical systems other than ordinary computers. Such systems include photonic node arrays \cite{TANAKA2019100} and \emph{origami} \cite{Bhovad2021} and have been studied in the context of robotic locomotion \cite{Bhovad2021}. The physics of these exotic reservoir systems are generally understood in terms of ODEs, in contrast to discrete time maps which are usually preferred when the reservoir computer is implemented digitally on an ordinary computer. These continuous time reservoir systems described by ODEs are the central object of this paper, which focuses on the particular setting where $z(t)$ is given by taking scalar (or low dimensional) observations of a higher dimensional source system. 

To be more specific, we imagine that there is vector field $\mathcal{V}$ on a manifold $M$, called the source system, which is hidden from view. We are able only to observe a trajectory of the source system via a scalar observation function $\omega : M \to \mathbb{R}$. Our goal is to feed this continuous scalar observation into a reservoir system such as \eqref{informal_system} in the hope that we can replicate the hidden source system in the reservoir space. If we are successful, then there is a map from the source dynamics to the reservoir space $\mathbb{R}^N$ called a generalised synchronisation (GS) $f_{(\omega,\mathcal{V},F)} : M \to \mathbb{R}^N$. Our goal is then to learn about the source dynamics from the image of $f_{(\omega,\mathcal{V},F)}$, and consider whether the GS $f_{(\omega,\mathcal{V},F)}$ allows us to train the reservoir system for tasks including forecasting future trajectories. 

We formalise these ideas throughout the remainder of this paper, which is set out as follows. In section \ref{continuous_time_GS} we establish conditions under which a reservoir system admits a generalised synchronisation in the sense defined by \cite{PhysRevLett.76.1816} and studied in \cite{PhysRevE.51.980,doi:10.1063/1.166278,stark_1999,BOCCALETTI20021,doi:10.1080/00107514.2017.1345844}. In section \ref{continuous_time_GS} we connect the existence of multiple synchronisation manifolds to the multi-ESP introduced in \cite{CENI2020132609}. In section \ref{linear_reservoir_system_section} we consider the special case of a linear reservoir system 
\begin{align*}
    \dot{x}(t) = F(x(t),z(t)) = -Ax(t) + Cz(t)
\end{align*}
for $A$ a square $N \times N$ reservoir matrix and $C$ a rectangular $N \times d$ input matrix. In this case, we derive a closed form expression for the associated generalised synchronisation $f_{(\omega,\mathcal{V},F)}$, establish conditions under which $f_{(\omega,\mathcal{V},F)} \in C^1$ is continuously differentiable, and show that for randomly generated $A,C$ the generalised synchronisation $f_{(\omega,\mathcal{V},F)}$ is a topological embedding on the fixed points of the source dynamics. These results are continuous time analogues of very recent results \cite{grig_2021} that hold for discrete time reservoir systems.

Our approach is similar in many respects to the approach taken in \cite{grig_2021} except we frame our results in terms of ODEs while the authors of \cite{grig_2021} frame theirs in terms of discrete time maps. We believe proving results in a continuous time setting may further the understanding of popular continuous time reservoir systems, like leaky integrator ESNs, which are often used to model physical reservoir systems.

\section{GS for Nonlinear Reservoir Systems}
\label{continuous_time_GS}

We will begin with the definition of a $V$-invariant ODE. Roughly speaking, a solution to the ODE which originates in the set $V$ will stay in $V$ for all future time.

\begin{defn}
    ($V$-invariant) Let $z \in C^0(\mathbb{R},\mathbb{R}^d)$ be a bounded function and $F \in C^0(\mathbb{R}^N \times \mathbb{R}^d, \mathbb{R}^N)$ be Lipschitz continuous.
    Then the non-autonomous ODE 
    \begin{align*}
        \dot{x}(t) = F(x(t),z(t))
    \end{align*}
    is called $V \subset \mathbb{R}^N$ invariant with respect to $z$ if for any initial point $x_0 \in V$ the solution $x : \mathbb{R} \to \mathbb{R}^N$ originating at $x_0 \in V$ remains in $V$ i.e $x(t) \in V$ for all $t \in \mathbb{R}^+$.
\end{defn}

Next, we define the concept of uniform $V$-asymptotic stability. Roughly we call a non-autonomous ODE uniformly $V$-asymptotically stable if, given any two initial points in $V$ the solutions originating from those points converge toward each other at a rate that is uniform over driving inputs $z$.



\begin{defn}
(Uniform $V$-asymptotic stability)
Let $M$ be a set, and for each $m \in M$ let $z_m \in C^0(\mathbb{R},\mathbb{R}^d)$. Then the non-autonomous ODEs
    \begin{align}
        \dot{x}(t) = F(x(t),z_m(t))
        \label{Uniform_V_stability_ODE}
    \end{align}
are uniformly $V$-asymptotically stable with respect to $\{ z_m \}_{m \in M}$ if there exists a positive valued function $\psi : \mathbb{R} \to \mathbb{R}^+$ converging to $0$ such that for all $m \in M$ and initial points $x_0,y_0 \in V \subset \mathbb{R}^N$ the solutions $x_m(t)$ and $y_m(t)$ to ODE \eqref{Uniform_V_stability_ODE} originating from $x_0,y_0$ satisfy
    \begin{align*}
        \lVert x_m(t) - y_m(t) \rVert \leq \psi(t)
    \end{align*}
\end{defn}
for all $t \geq 0$.

\begin{remark}
    The notion of uniform $V$-asymptotic stability is closely related to the Echo State Property given in Definition 1 of \cite{JAEGER2007335}, and global input related stability discussed in \cite{Manjunath_2022}.
\end{remark}

We are especially interested in ODEs given by a reservoir map $F$ driven by observations from source dynamics evolving on a manifold $M$. This system is called a reservoir system.

\begin{defn}
    (Reservoir system) Suppose that $\mathcal{V}$ is a smooth vector field on a smooth manifold $M$ with associated evolution operators $\{ \phi^t \in \text{Diff}^1(M) \ | \ t \in \mathbb{R} \}$ that form a group under composition such that $\phi^{t_1 + t_2} = \phi^{t_1} \phi^{t_2}$. Let $\omega \in C^0(M,\mathbb{R}^d)$ be a continuous observation function, and let $F \in C^1(\mathbb{R}^N \times \mathbb{R}^d , \mathbb{R}^N)$ be Lipschitz continuous. Then we can define the continuous time reservoir system 
    \begin{align}
        \dot{x}(t) = F(x(t),\omega \phi^{t}(m)). \label{ODE}
    \end{align}
\end{defn}

Associated to the manifold $M$ and vector field $\mathcal{V}$ of the source system is the Lie derivative $\mathcal{L}_{\mathcal{V}}$. We introduce the Lie derivative here because it will arise later in an expression involving the generalised synchronisation $f_{(\omega,\mathcal{V},F)}$.

\begin{defn}
    (Lie derivative) Suppose that $\mathcal{V}$ is a smooth vector field on a smooth manifold $M$ with associated evolution operators $\{ \phi^t \in \text{Diff}^1(M) \ | \ t \in \mathbb{R} \}$. Then the Lie derivative of a map $u : M \to \mathbb{R}^N$ is defined by
    \begin{align*}
        \mathcal{L}_{\mathcal{V}}u (m) = \frac{d}{dt} u\phi^t(m)\rvert_{t=0}
    \end{align*}
    when the derivative on the right hand side exists.
\end{defn}

We are now ready to introduce the concept of $V$-generalised synchronisation studied by \cite{PhysRevLett.76.1816}.

\begin{defn}
    ($V$-Generalised Synchronisation, \cite{PhysRevLett.76.1816}) Reservoir system \eqref{ODE} admits a $V$-generalised synchronisation $f_{(\omega,\mathcal{V},F)} : M \to \mathbb{R}^N$ if, for any initial $r \in V \subset \mathbb{R}^N$ and $m \in M$, the solution $x^r_m(t)$ of system \eqref{ODE} satisfies
    \begin{align*}
        \lim_{t \to \infty} \lVert f_{(\omega,\mathcal{V},F)}\phi^t(m) - x^r_m(t) \rVert = 0.
    \end{align*}
\end{defn}

We will introduce a slightly stronger concept of synchronisation which we call uniform $V$-generalised synchronisation, which ensures that the map $f_{(\omega,\mathcal{V},F)}$ is continuous.

\begin{defn}
    (Uniform $V$-Generalised Synchronisation) Reservoir system \eqref{ODE} admits a uniform $V$-generalised synchronisation $f_{(\omega,\mathcal{V},F)} \in C^0(M,V)$ if there exists a positive function $\psi : \mathbb{R} \to \mathbb{R}^+$ converging to $0$ such that for any initial $r \in V \subset \mathbb{R}^N$ and $m \in M$ the solution $x^r_m(t)$ of system \eqref{ODE} satisfies
    \begin{align*}
        \lVert f_{(\omega,\mathcal{V},F)}\phi^t(m) - x^r_m(t) \rVert \leq \psi(t)
    \end{align*}
    for all $t \geq 0$.
\end{defn}

The seminal paper by \cite{PhysRevLett.76.1816} proves that asymptotic stability is equivalent to the existence of a generalised synchronisation. We will prove a very similar result - stating that uniform $V$-asymptotic stability is equivalent to the existence of a uniform $V$-generalised synchronisation. The uniform $V$-generalised synchronisation $f_{(\omega,\mathcal{V},F)}$ is continuous, and the image $f_{(\omega,\mathcal{V},F)}(M)$ is therefore a manifold. These two properties of $f_{(\omega,\mathcal{V},F)}$ are convenient and not established explicitly in \cite{PhysRevLett.76.1816}.

\begin{theorem}
    \citep{PhysRevLett.76.1816} Reservoir system \eqref{ODE} admits a uniform $V$-generalised synchronisation $f_{(\omega,\mathcal{V},F)} \in C^0(M,\mathbb{R}^N)$ if and only if \eqref{ODE} is uniformly $V$-asymptotically stable with respect to $\{\omega\phi^t(m)\}_{m \in M}$. Furthermore, the uniform $V$-generalised synchronisation $f_{(\omega,\mathcal{V},F)}$ solves the quasilinear PDE
    \begin{align*} 
        \mathcal{L}_{\mathcal{V}}f_{(\omega,\mathcal{V},F)} = F(f_{(\omega,\mathcal{V},F)},\omega).
    \end{align*}
    \label{GS_iff_as}
\end{theorem}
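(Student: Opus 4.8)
The plan is to prove both directions of the equivalence, then verify the PDE. For the forward direction, suppose a uniform $V$-generalised synchronisation $f := f_{(\omega,\mathcal{V},F)}$ exists with rate function $\psi$. Given two initial points $x_0, y_0 \in V$ and any driving input $\omega\phi^t(m)$, I would compare each of the two solutions $x_m(t)$ and $y_m(t)$ to the reference orbit $t \mapsto f\phi^t(m)$; by the defining inequality both stay within $\psi(t)$ of it, so the triangle inequality gives $\lVert x_m(t) - y_m(t) \rVert \le 2\psi(t)$, and $2\psi$ is the required uniform rate. A subtlety here is that the solution $x^r_m(t)$ must be defined for all $t \ge 0$; this should follow from $V$-invariance plus boundedness of $\omega$ and Lipschitz continuity of $F$ (linear growth bound on solutions), and I would either invoke $V$-invariance as a hypothesis bundled into the definitions or note it explicitly.

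For the harder, reverse direction, suppose the ODEs are uniformly $V$-asymptotically stable with rate $\psi$. The idea is to construct $f$ as a limit of pulled-back orbit endpoints. Fix any reference point $r_0 \in V$. For $m \in M$ and $t \ge 0$ consider the solution starting at $r_0$ driven by the shifted input $s \mapsto \omega\phi^{s-t}(m)$ — equivalently, run the reservoir from $r_0$ for time $t$ using the observation history of the orbit through $\phi^{-t}(m)$ — and evaluate it at time $t$; call this $g_t(m) \in V$ (using $V$-invariance). For $t_2 > t_1$, the points $g_{t_1}(m)$ and $g_{t_2}(m)$ are both time-$t_1$ values of solutions with the \emph{same} driving input on $[0,t_1]$ but different initial conditions in $V$ (one is $r_0$, the other is the time-$(t_2-t_1)$ value of another $V$-orbit), so uniform $V$-asymptotic stability gives $\lVert g_{t_1}(m) - g_{t_2}(m) \rVert \le \psi(t_1)$. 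Hence $\{g_t(m)\}$ is Cauchy as $t \to \infty$ and we may define $f(m) := \lim_{t\to\infty} g_t(m)$. The same estimate, applied with $r_0$ replaced by an arbitrary $r \in V$, shows $\lVert f\phi^t(m) - x^r_m(t)\rVert \le \psi(t)$ after identifying $f\phi^t(m)$ with the appropriate limit and using the group law $\phi^{t_1+t_2} = \phi^{t_1}\phi^{t_2}$; I expect bookkeeping with these time shifts and the flow property to be the main obstacle, together with showing the limit is genuinely independent of $r_0$.

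Continuity of $f$ comes from the uniform-in-$m$ nature of the convergence: $\lVert g_t(m) - f(m)\rVert \le \psi(t)$ with $\psi$ independent of $m$, and each $g_t$ is continuous in $m$ because $\phi^t$ is a diffeomorphism, $\omega$ is continuous, and solutions of the ODE depend continuously on the driving input (continuous dependence for Lipschitz $F$, e.g.\ via Grönwall). A uniform limit of continuous functions is continuous, so $f \in C^0(M,V)$, and since $\psi$ is the same $\psi$ throughout, the synchronisation is uniform as claimed.

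Finally, for the PDE, I would use the intertwining relation $f\phi^t(m) = x^{f(m)}_m(t)$, which follows from the synchronisation estimate together with uniqueness of solutions: the curve $t \mapsto f\phi^t(m)$ and the solution $t \mapsto x^{f(m)}_m(t)$ stay within $\psi(t) \to 0$ of each other, and by replacing $m$ with $\phi^s(m)$ and using the group law one checks they must coincide (alternatively, $f\phi^t(m)$ is itself a bounded solution of the reservoir ODE driven by $\omega\phi^\cdot(m)$ and attracts all $V$-orbits, forcing the identity). Granting $f\phi^t(m) = x^{f(m)}_m(t)$, differentiate at $t = 0$: the left side is $\mathcal{L}_{\mathcal{V}}f(m)$ by definition of the Lie derivative, and the right side is $\dot{x}(0) = F(x(0), \omega\phi^0(m)) = F(f(m), \omega(m))$. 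This yields $\mathcal{L}_{\mathcal{V}}f = F(f, \omega)$ pointwise; I should remark that the Lie derivative exists precisely because this right-hand side is well defined and the orbit map is differentiable, closing the argument.
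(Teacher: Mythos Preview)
Your proposal is correct and follows essentially the same approach as the paper: the easy direction via the triangle inequality giving rate $2\psi$, the construction of $f(m)$ as the limit of the pulled-back orbit endpoints $g_t(m) = x^{r_0}_{\phi^{-t}(m)}(t)$ shown to be Cauchy by uniform asymptotic stability (with continuity from uniform convergence), and the PDE from differentiating the intertwining relation $f\phi^t(m) = x^{f(m)}_m(t)$ at $t=0$. The paper obtains this intertwining relation directly from the limit definition and the flow property via $f\phi^t(m) = \lim_{s\to\infty} x^r_{\phi^{-s+t}(m)}(s) = \lim_{s\to\infty} x^{x^r_{\phi^{-s}(m)}(s)}_m(t) = x_m^{f(m)}(t)$, which is cleaner than the uniqueness/attraction argument you sketch, but otherwise your outline matches.
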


\begin{proof}
    Suppose first of all that \eqref{ODE} is uniformly $V$ asymptotically stable with respect to $\{ \omega \phi^t(m) \}_{m \in M}$. Let $x^r_m(t)$ denote the solution of ODE \eqref{ODE}
    originating from the initial point $r \in V \subset \mathbb{R}^N$. Then by uniform $V$-asymptotic stability there exists a positive function $\psi : \mathbb{R} \to \mathbb{R}^+$ converging to $0$ such that, for any $s,\sigma > 0$ that satisfy $0 \leq s \leq \sigma$, the solutions $x^r_{\phi^{-s}(m)}(t)$ and $x^{x^r_{\phi^{-\sigma}(m)}(\sigma - s)}_{\phi^{-s}(m)}(t)$ originating from $r \in V$ and $x^r_{\phi^{-\sigma}(m)}(\sigma - s) \in V$ satisfy 
    \begin{align*}
        \lVert x^r_{\phi^{-s}(m)}(t) - x^{x^r_{\phi^{-\sigma}(m)}(\sigma - s)}_{\phi^{-s}(m)}(t) \rVert
        \leq \psi(t)
    \end{align*}
    for all $t \geq 0$.
    Then at the particular time $t = s$
    \begin{align*}
         \lVert x^r_{\phi^{-s}(m)}(s) - x^{x^r_{\phi^{-\sigma}(m)}(\sigma - s)}_{\phi^{-s}(m)}(s) \rVert
        \leq \psi(s).
    \end{align*}
    Then by Lemma \ref{notation_lemma}
    \begin{align*}
        x^r_{\phi^{-\sigma}(m)}(\sigma) = x^{x^r_{\phi^{-\sigma}(m)}(\sigma - s)}_{\phi^{-s}(m)}(s)
    \end{align*}
    so
    \begin{align*}
        \lVert x^r_{\phi^{-s}(m)}(s) - x^r_{\phi^{-\sigma}(m)}(\sigma) \rVert
        \leq \psi(s).
    \end{align*}
    Now $\psi$ is a positive function independent of $m \in M$ that converges to $0$ so the convergence of
    \begin{align*}
    \lim_{s \to \infty} x^r_{\phi^{-s}(m)}(s) =: f^r(m)
    \end{align*}
    is uniform over $m \in M$. The uniform convergence ensures $f^r \in C^0(M,\mathbb{R}^N)$ is continuous. Furthermore for any other initial point $ \rho \in V \subset \mathbb{R}^N$ it follows from $V$-asymptotic stability that
    \begin{align*}
        \lVert x^\rho_{\phi^{-s}(m)}(s) - x^r_{\phi^{-s}(m)}(s) \rVert \leq \psi(s)
    \end{align*}
    hence
    \begin{align*}
        f^\rho(m) = \lim_{s \to \infty} x^\rho_{\phi^{-s}(m)}(s) = \lim_{s \to \infty} x^r_{\phi^{-s}(m)}(s) = f^r(m)
    \end{align*}
    so $f^\rho(m) =: f(m)$ does not depend on $\rho \in V$. Now
    \begin{align*}
        f\phi^t(m) = \lim_{s \to \infty} x^r_{\phi^{-s+t}(m)}(s) = \lim_{s \to \infty} x^{x^r_{\phi^{-s}(m)}(s)}_m(t) = x_m^{f(m)}(t)
    \end{align*}
    so $f\phi^t(m)$ is the solution to the reservoir system originating from the initial point $f(m) \in V \subset \mathbb{R}^N$.
    Then for any initial $\rho \in V$ and $m \in M$
    the solution $x^\rho_m(t)$ satisfies
    \begin{align*}
        \lVert x^\rho_m(t) - f\phi^t(m) \rVert \leq \psi(t)
    \end{align*}
    which establishes that $f$ is a uniform $V$-generalised synchronisation. Now to prove the converse observe that for any $m \in M$ and $r,\rho \in V \subset \mathbb{R}^N$
    \begin{align*}
        \lVert x^r_m(t) - x^\rho_m(t) \rVert &\leq \lVert x^r_m(t) - f_{(\omega,\mathcal{V},F)}\phi^t(m) \rVert + \lVert f_{(\omega,\mathcal{V},F)}\phi^t(m) - x^\rho_m(t) \rVert \\ &\leq 2\psi(t)
    \end{align*}
    so the existence of the uniform $V$-GS implies uniform $V$ asymptotic stability. 
    Now we can take the Lie derivative of $f\phi^t(m)$ to see that

    \begin{align*}
        \mathcal{L}_{\mathcal{V}}f(m) &= \frac{d}{dt}f\phi^t(m)\rvert_{t=0} \\
        &= \frac{d}{dt} x_m^{f(m)}(t)\rvert_{t=0} \\
        &= \dot{x}_m^{f(m)}(t)\rvert_{t=0} \\
        &= F(x_m^{f(m)}(0),\omega(m)) \\ 
        &= F(f(m),\omega(m))
    \end{align*}
hence in general
\begin{align*}
    \mathcal{L}_{\mathcal{V}}f = F(f,\omega).
\end{align*}
    
\end{proof}

It is useful to establish conditions on the reservoir map $F$ that ensure that the associated reservoir system \eqref{ODE} is uniformly $V$-asymptotically stable with respect to $\{ \omega\phi^t(m)\}_{m \in M}$, as this ensures the existence of a uniform $V$-GS $f_{(\omega,\mathcal{V},F)}$.

\begin{theorem}
\label{GS_condition}
    Let $V \subset \mathbb{R}^N$ be a bounded convex set and suppose reservoir system \eqref{ODE} is $V$-invariant. Suppose further there exists a $\delta > 0$ such that for any $w,v \in V$ and $z \in \omega(M)$  
    \begin{align}
         \frac{v^{\top} D_x F(w,z) v}{\lVert v \rVert^2} < -\delta,
         \label{uniform_SPD}
    \end{align}
    where $DF_x(x,z)$ denotes the derivative of $F(x,z)$ with respect to $x$.
    Then reservoir system \ref{ODE} is uniformly $V$-asymptotically stable with respect to $\{ \omega \phi^t(m) \}_{m \in M}$ hence
    admits a uniform $V$-generalised synchronisation $f_{(\omega,\mathcal{V},F)} \in C^0(M,V)$.
\end{theorem}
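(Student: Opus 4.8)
The plan is to run a Lyapunov-type contraction argument on the squared distance between two trajectories, convert hypothesis \eqref{uniform_SPD} into a differential inequality via a mean-value identity, integrate it with Grönwall, and then invoke Theorem \ref{GS_iff_as}. Concretely: fix $m \in M$, abbreviate $z(t) := \omega\phi^t(m)$, and let $x(t),y(t)$ be the solutions of \eqref{ODE} originating from $x_0,y_0 \in V$. By $V$-invariance both trajectories stay in $V$ for all $t \geq 0$, and since $F \in C^1$ with $z$ continuous the solutions are $C^1$, so $g(t) := \lVert x(t)-y(t)\rVert^2$ is differentiable with
\begin{align*}
    \dot g(t) = 2\bigl(x(t)-y(t)\bigr)^{\top}\bigl(F(x(t),z(t)) - F(y(t),z(t))\bigr).
\end{align*}

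The key step will be to estimate the right-hand side. Because $V$ is convex, the segment $\{\,y(t) + s(x(t)-y(t)) : s\in[0,1]\,\}$ lies in $V$, so the fundamental theorem of calculus gives
\begin{align*}
    F(x(t),z(t)) - F(y(t),z(t)) = \int_0^1 D_x F\bigl(y(t)+s(x(t)-y(t)),\,z(t)\bigr)\,\bigl(x(t)-y(t)\bigr)\,ds .
\end{align*}
Pairing with $(x(t)-y(t))^{\top}$ and applying \eqref{uniform_SPD} with $v = x(t)-y(t)$, $w = y(t)+s(x(t)-y(t)) \in V$ and $z = z(t) \in \omega(M)$ (the case $x(t)=y(t)$ being trivial) yields $\dot g(t) \leq -2\delta\, g(t)$. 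Grönwall's inequality then gives $g(t) \leq g(0)e^{-2\delta t}$, hence
\begin{align*}
    \lVert x(t) - y(t) \rVert \leq \lVert x_0 - y_0 \rVert\, e^{-\delta t} \leq \operatorname{diam}(V)\, e^{-\delta t}
\end{align*}
for all $t \geq 0$, where $\operatorname{diam}(V) < \infty$ by boundedness of $V$.

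Taking $\psi(t) := (\operatorname{diam}(V)+1)e^{-\delta t}$ — positive, convergent to $0$, and independent of $m$ — establishes that \eqref{ODE} is uniformly $V$-asymptotically stable with respect to $\{\omega\phi^t(m)\}_{m\in M}$, so Theorem \ref{GS_iff_as} supplies a uniform $V$-generalised synchronisation $f_{(\omega,\mathcal{V},F)} \in C^0(M,\mathbb{R}^N)$; since every approximating solution lies in $V$ by $V$-invariance, $f_{(\omega,\mathcal{V},F)}$ takes values in $V$ (using that $V$ is closed, or otherwise in $\overline V$), giving $f_{(\omega,\mathcal{V},F)} \in C^0(M,V)$. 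I expect the only real subtleties to be (i) justifying differentiability of $t \mapsto \lVert x(t)-y(t)\rVert^2$ and differentiation through the norm, which follows from $C^1$ regularity of the solutions, and (ii) the convexity argument guaranteeing that the mean-value segment stays inside $V$, which is exactly where the hypotheses on $V$ are used; the Grönwall estimate itself is routine.
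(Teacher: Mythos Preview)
Your proposal is correct and follows essentially the same route as the paper: differentiate $\lVert x(t)-y(t)\rVert^2$, use convexity of $V$ and a mean-value identity to invoke hypothesis \eqref{uniform_SPD}, integrate the resulting differential inequality to get exponential contraction bounded by $\operatorname{diam}(V)\,e^{-\delta t}$, and then apply Theorem \ref{GS_iff_as}. The only cosmetic difference is that you use the integral mean-value formula $F(x,z)-F(y,z)=\int_0^1 D_xF(\cdot)\,(x-y)\,ds$ rather than the paper's single-intermediate-point version---yours is in fact the cleaner choice for a vector-valued $F$, but the argument is otherwise identical.
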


\begin{proof}
    $V \subset \mathbb{R}^N$ is bounded so there exists a $K > 0$ such that for any $x_0,y_0 \in V$ the following bound
    \begin{align}
        \lVert x_0 - y_0 \rVert \leq K \label{bound}
    \end{align}
    holds.
    Now let $x,y : \mathbb{R} \to \mathbb{R}^N$ be solutions to \eqref{ODE} originating from initial points $x_0,y_0 \in V \subset \mathbb{R}^N$ respectively. Reservoir system \eqref{ODE} is $V$-invariant so the solutions $x(t),y(t) \in V$ for all $t > 0$. Then for any $m \in M$ it follows from the convexity of $V$ and the Mean Value Theorem (MVT) that there exists a curve $s : \mathbb{R}^+ \to \mathbb{R}^N$ such that
    \begin{align*}
        D_xF(s(t),\omega\phi^{t}(m))(x(t) - y(t)) = F(x(t),\omega\phi^{t}(m)) - F(y(t),\omega\phi^{t}(m)).
    \end{align*}
    Now we observe that
    \begin{align*}
        &\frac{d}{dt} \lVert x(t) - y(t) \rVert^2 \\
        &= 2(x(t) - y(t))^{\top}(\dot{x}(t) - \dot{y}(t)) \\
        &= 2(x(t) - y(t))^{\top}(F(x(t),\omega\phi^{t}(m)) - F(y(t),\omega\phi^{t}(m))) \\
        &= 2(x(t) - y(t))^{\top}D_xF(s(t),\omega\phi^{t}(m))(x(t) - y(t)), \ \text{(by MVT)} \\
        &= \frac{2(x(t) - y(t))^{\top}D_xF(s(t),\omega\phi^{t}(m))(x(t) - y(t))}{\lVert x(t) - y(t) \rVert^2}\lVert x(t) - y(t) \rVert^2 \\
        &\leq -2\delta \lVert x(t) - y(t) \rVert^2
    \end{align*}
    Then using separation of variables, and then bound \eqref{bound}, it follows that
    \begin{align*}
        \lVert x(t) - y(t) \rVert^2 \leq e^{-2 \delta t} \lVert x_0 - y_0 \rVert^2 \leq e^{-2 \delta t}K^2
    \end{align*}
    so 
    \begin{align*}
        \lVert x(t) - y(t) \rVert \leq e^{-\delta t} K =: \psi(t)
    \end{align*}
    where $\psi(t)$ is a positive function converging to 0. This establishes that \eqref{ODE} is uniformly $V$-asymptotically stable with respect to $\{ \omega\phi^t(m)\}_{m \in M}$ which implies \eqref{ODE} admits a uniform $V$-generalised synchronisation $f_{(\omega,\mathcal{V},F)} \in C^0(M,\mathbb{R}^N)$ by Theorem \ref{GS_iff_as}.
    
\end{proof}

It is possible for reservoir system \eqref{ODE} to admit several distinct generalised synchronisations simultaneously. To see this, suppose that $V_1, \ldots, V_n$ are pairwise disjoint subsets of $\mathbb{R}^N$ and that for each $i = 1, \ldots, n$ \eqref{ODE} admits a $V_i$-generalised synchronisation $f_{i(\omega,\mathcal{V},F)} \in C^0(M,V_i)$. The existence of multiple generalised synchronisations is closely related to the multi-ESP studied in \cite{CENI2020132609}, and the discrete synchronisation results in \cite{chaos_on_compacta}. A numerical experiment which produces multiple generalised synchronisations is described and performed in Section \ref{numerical_illustration}.

\section{Numerical Illustration of Multiple GS}
\label{numerical_illustration}

To demonstrate the existence of multi-GS we consider a hidden source system defined by the vector field $\mathcal{V}$ on the manifold $M = \mathbb{R}^2$ defined by the ODE
\begin{align}
    \dot{u} &= -v \nonumber \\
    \dot{v} &= u \label{circle_ODEs}
\end{align}
which has an associated group of evolution operators $\{\phi^t \in \text{Diff}^{\infty}(M) \ | \ t \in \mathbb{R}\}$ defined by
\begin{align*}
    \phi^t(u_0,v_0) = (u_0,v_0) + \int_0^t (\dot{u}(\tau), \dot{v}(\tau)) \ d\tau  
\end{align*}
where $u(t),v(t)$ are the solutions of the ODEs \eqref{circle_ODEs} under the initial condition $u(0) = u_0, v(0) = v_0$. The vector field $\mathcal{V}$ represents circular motion and the image of the trajectory $\{ \phi^t(0,1) \ | \ t \in \mathbb{R} \}$ is a circle.
The trajectory $\phi^t(0,1)$ is observed via the function $\omega(u,v) = u$.
The sequence of observations is shown in Figure \ref{fig::ODE_circle_obs}, along with a so-called washout period to give the dynamics time to synchronise.
\begin{figure}
  \caption{The sequence of observations $\{ \omega\phi^t(0,1) \ | \ t \in (0,100) \}$ with a washout period $(0,35)$ coloured blue.}
  \centering
    \includegraphics[width=0.7\textwidth]{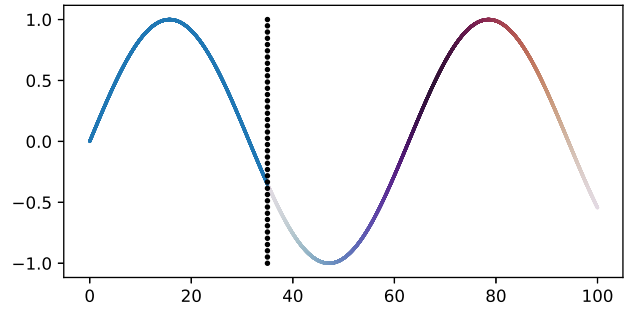}
  \label{fig::ODE_circle_obs}
\end{figure}
The reservoir map $F : \mathbb{R}^2 \times \mathbb{R} \to \mathbb{R}^2$ is defined by
\begin{align}
    F(x(t),y(t);z(t)) = 
    \begin{bmatrix}
        \sin(2\pi x(t)) + \lambda \sin(z(t)) \\
        \sin(2\pi y(t)) + \lambda \cos(z(t))
    \end{bmatrix}. \label{nonlinear_reservoir}
\end{align}
Then we fix $\lambda = 1$ and solve the reservoir system
\begin{align*}
    (\dot{x}(t),\dot{y}(t)) = F(x(t),y(t);\omega\phi^t(0,1))
\end{align*}
under 4 initial conditions for the reservoir states $(x(0),y(0)) = $ $(1/2,1/2)$, $(-1/2,1/2)$, $(1/2,-1/2)$, $(-1/2,-1/2)$. Each of these initial points belong to one of four disjoint subsets $V_1, V_2, V_3, V_4$ for which the ODEs have a $V_i$-generalised synchronisation. The synchronised dynamics are plotted in Figure \ref{fig::ODE_circle_reservoir}.
\begin{figure}
  \caption{The vector field defined by reservoir map \eqref{nonlinear_reservoir} with $\lambda = 0$ along with the reservoir states $x(t),y(t)$ for times $t \in (35,100)$ originating from 4 initial conditions $(x(0),y(0)) = $ $(1/2,1/2)$, $(-1/2,1/2)$, $(1/2,-1/2)$, $(-1/2,-1/2)$. This reveals the image of four distinct generalised synchronisations each mapping the circle into a different region $V_i$ of the reservoir space. The change in colour is indicative of the change in time.}
  \centering
    \includegraphics[width=0.7\textwidth]{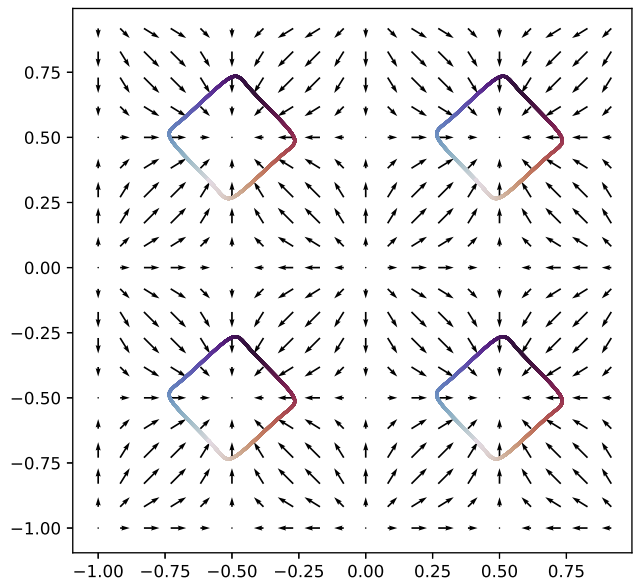}
    \label{fig::ODE_circle_reservoir}
\end{figure}

\section{GS for Linear Reservoir Systems}
\label{linear_reservoir_system_section}
\subsection{Differentiable generalised synchronisation}
In this paper, we have so far considered reservoir maps $F$ that are in general nonlinear. In this section we will explore in detail the special case where $F$ is a linear map. In particular, we define the linear reservoir system by the ODE
\begin{align}
    \dot{x}(t) = F(x(t),\omega\phi^t(m)) := -Ax(t) + C\omega\phi^t(m) \label{linear_continuous_time}
\end{align}
where $A \in \mathbb{M}_{N \times N}(\mathbb{R})$ is symmetric positive definite, which ensures that condition \eqref{uniform_SPD} in Theorem \ref{GS_condition} is satisfied, and $C \in \mathbb{M}_{N \times d}(\mathbb{R})$. 

The linear reservoir system has been called a \emph{next-generation reservoir computer} in a recent influential paper \cite{Gauthier2021}, and is much more amenable to mathematical analysis than the generally nonlinear case studied so far. Despite its simplicity, the linear reservoir computer has interesting properties we will explore in the remainder of this paper. In the following theorem, we will obtain a closed form expression for the reservoir states $x(t)$ and associated generalised synchronisation $f_{(\omega,\mathcal{V},F)}$ for the linear system.
\begin{prop}
Suppose that $\omega \in C^0(M,\mathbb{R})$ is bounded. Then for any $m \in M$ and initial state $x(0) = x_0 \in \mathbb{R}^N$ reservoir system \eqref{linear_continuous_time} admits the unique solution $x : \mathbb{R} \to \mathbb{R}^N$ defined by
\begin{align}
    x(t) = \int_{0}^{\infty}e^{-A\tau}C\omega\phi^{-\tau+t}(m) \ d\tau + e^{-At} \bigg( x_0 - \int_{0}^{\infty}e^{-A\tau}C\omega\phi^{-\tau}(m) \bigg), \label{solution}
    \end{align}
    and associated generalised synchronisation
    \begin{align}
        f_{(\omega,\mathcal{V},F)}(m) = \int_{0}^{\infty}e^{-A\tau}C\omega\phi^{-\tau}(m) \ d\tau. \label{GS}
\end{align}
\end{prop}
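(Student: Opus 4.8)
The plan is to solve the non‑autonomous linear ODE \eqref{linear_continuous_time} by the variation of parameters (Duhamel) formula and then re‑express the resulting solution in the claimed form by a change of variables in the integral, using that $A$ symmetric positive definite forces $e^{-At}$ to decay exponentially. First I would record that the forcing term $t \mapsto C\omega\phi^t(m)$ is continuous (since $\phi^t$ is a flow and $\omega \in C^0$) and bounded, with $\sup_{t}\lVert C\omega\phi^t(m)\rVert \le \lVert C \rVert\, \sup_{p \in M}\lvert\omega(p)\rvert =: B < \infty$; standard linear ODE theory then yields a unique solution $x:\mathbb{R}\to\mathbb{R}^N$ for each $x_0$, namely
\[
x(t) = e^{-At}x_0 + \int_0^t e^{-A(t-s)}C\omega\phi^s(m)\, ds,
\]
and substituting $\tau = t-s$ rewrites the integral as $\int_0^t e^{-A\tau}C\omega\phi^{t-\tau}(m)\, d\tau$.

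Next I would set $g(m) := \int_0^\infty e^{-A\tau}C\omega\phi^{-\tau}(m)\, d\tau$, noting this improper integral converges absolutely because $A$ symmetric positive definite gives $\lVert e^{-A\tau}\rVert \le e^{-\lambda_{\min}(A)\tau}$ with $\lambda_{\min}(A)>0$, so the integrand is dominated by $B\,e^{-\lambda_{\min}(A)\tau}$. Splitting $\int_0^\infty e^{-A\tau}C\omega\phi^{t-\tau}(m)\,d\tau = \int_0^t(\cdots)\,d\tau + \int_t^\infty(\cdots)\,d\tau$ and, in the tail, substituting $\sigma = \tau - t$ and factoring out $e^{-At}$ gives $\int_t^\infty e^{-A\tau}C\omega\phi^{t-\tau}(m)\,d\tau = e^{-At}g(m)$, while the group law $\phi^{-\tau}\circ\phi^t=\phi^{t-\tau}$ identifies $\int_0^\infty e^{-A\tau}C\omega\phi^{t-\tau}(m)\,d\tau = g\phi^t(m)$. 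Rearranging yields $\int_0^t e^{-A\tau}C\omega\phi^{t-\tau}(m)\,d\tau = g\phi^t(m) - e^{-At}g(m)$, and substituting into the Duhamel formula produces exactly \eqref{solution}, i.e. $x(t) = g\phi^t(m) + e^{-At}(x_0 - g(m))$.

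For the generalised synchronisation I would then argue in either of two equivalent ways. Since $A$ symmetric positive definite makes condition \eqref{uniform_SPD} hold and any sufficiently large closed ball is forward‑invariant and convex, Theorem \ref{GS_condition} (via Theorem \ref{GS_iff_as}) guarantees a uniform $V$‑GS and identifies it as $f_{(\omega,\mathcal V,F)}(m) = \lim_{s\to\infty}x^r_{\phi^{-s}(m)}(s)$; evaluating the closed form at the base point $\phi^{-s}(m)$, using the group law, and letting $s\to\infty$ annihilates the $e^{-As}(\cdots)$ term and leaves $g(m)$, which is \eqref{GS}. Alternatively and more directly, from $x(t) = g\phi^t(m) + e^{-At}(x_0-g(m))$ one reads off $\lVert x(t) - g\phi^t(m)\rVert \le e^{-\lambda_{\min}(A)t}\lVert x_0 - g(m)\rVert \to 0$ uniformly in $m$, while $g \in C^0(M,\mathbb{R}^N)$ follows from dominated convergence together with continuity of $\phi^{-\tau}$ and $\omega$; this verifies the definition of uniform $V$‑GS directly and forces $f_{(\omega,\mathcal V,F)} = g$.

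I do not anticipate a genuine obstacle: the substance is the change‑of‑variables bookkeeping plus the observation that symmetry and positivity of $A$ yield the bound $\lVert e^{-A\tau}\rVert \le e^{-\lambda_{\min}(A)\tau}$ that both makes the tail integral converge and legitimises the identification $g\phi^t(m) = \int_0^\infty e^{-A\tau}C\omega\phi^{t-\tau}(m)\,d\tau$. The one point requiring care is keeping the group law for $\{\phi^t\}$ and the semigroup law for $\{e^{-At}\}_{t\ge 0}$ straight while shifting the variable of integration, and making sure the boundedness of $\omega$ (hence of $\omega\circ\phi^s$) is invoked to justify the absolute convergence everywhere it is used.
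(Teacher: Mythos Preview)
Your proposal is correct and complete. The route differs from the paper's in one substantive respect: the paper posits the infinite integral $x_i(t)=\int_0^\infty e^{-A\tau}C\omega\phi^{-\tau+t}(m)\,d\tau$ as an ansatz for a particular solution and verifies it by differentiating under the integral sign via the Leibniz rule (after the substitution $u=-\tau+t$), then adds the homogeneous solution and fixes the constant from the initial condition. You instead start from the standard finite-time Duhamel formula $x(t)=e^{-At}x_0+\int_0^t e^{-A(t-s)}C\omega\phi^s(m)\,ds$, identify $\int_0^\infty e^{-A\tau}C\omega\phi^{t-\tau}(m)\,d\tau=g\phi^t(m)$ via the flow law, and recover the finite integral by subtracting the tail $\int_t^\infty(\cdots)=e^{-At}g(m)$. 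Your derivation is more constructive (no ansatz is needed and the form \eqref{solution} emerges from the bookkeeping), while the paper's verification is shorter once the answer is in hand; both rely on the same estimate $\lVert e^{-A\tau}\rVert\le e^{-\lambda_{\min}(A)\tau}$ to justify the improper integrals, and both finish the GS claim by reading off the exponential decay of $e^{-At}(x_0-g(m))$. Your added remarks on continuity of $g$ and uniformity of the convergence in $m$ go slightly beyond what the paper's proof records.
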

\begin{proof}
    We start by expressing \eqref{linear_continuous_time} as
    \begin{align*}
        \dot{x} + Ax = C\omega\phi^t(m)
    \end{align*}
    and consider first the homogeneous equation
    \begin{align*}
        \dot{x} + Ax = 0.
    \end{align*}
    We solve this using separation of variables, and obtain the general solution $x_h(t) = e^{-At}v$ where $v \in \mathbb{R}^N$ is a constant of integration. We will now show that
    \begin{align*}
        x_i(t) = \int_{0}^{\infty}e^{-A\tau}C\omega\phi^{-\tau+t}(m) \ d\tau
    \end{align*}
    is an inhomogeneous solution to \eqref{ODE} by taking the derivative of $x_i$ and verifying that the result satisfies \eqref{ODE}.
    \begin{align}
        x_i(t) &= \int_{0}^{\infty}e^{-A\tau}C\omega\phi^{-\tau+t}(m) \ d\tau \nonumber \\
        &= -\int_{-t}^{\infty}e^{-A(t-u)}C\omega\phi^{u}(m) \ du \qquad (u = -\tau + t) \label{xt}
    \end{align}
    hence 
    \begin{align*}
        \dot{x_i}(t) &= -\frac{d}{dt} \int_{-t}^{\infty}e^{-A(t-u)}C\omega\phi^{u}(m) \ du \\
        &= A\int^{\infty}_{-t} e^{-A(t-u)}C\omega\phi^u(m) \ du + C\omega\phi^t(m) \qquad \text{(Leibniz Integral Rule)} \\
        &= -Ax_i(t) + C\omega\phi^t(m) \qquad \text{by \eqref{xt}}.
    \end{align*}
    Then the full solution 
    \begin{align*}
        x(t) = e^{-At}v + \int_{0}^{\infty}e^{-A\tau}C\omega\phi^{-\tau+t}(m) \ d\tau
    \end{align*}
    must satisfy the initial condition
    \begin{align*}
        x_0 = x(0) = v + \int_{0}^{\infty}e^{-A\tau}C\omega\phi^{-\tau}(m) \ d\tau
    \end{align*}
    hence 
    \begin{align*}
        v = x_0 - \int_{0}^{\infty}e^{-A\tau}C\omega\phi^{-\tau}(m) \ d\tau.
    \end{align*}
    Now we define the map $f_{(\omega,\mathcal{V},F)} : M \to \mathbb{R}^N$ by
    \begin{align}
        f_{(\omega,\mathcal{V},F)}(m) = \int_{0}^{\infty}e^{-A\tau}C\omega\phi^{-\tau}(m) \ d\tau
    \end{align}
    for each $m \in M$ and observe that for any initial reservoir state $x_0 \in \mathbb{R}^N$ the solution $x(t)$ converges to $f_{(\omega,\mathcal{V},F)}\phi^t(m)$. This establishes that $f_{(\omega,\mathcal{V},F)}$ is a generalised synchronisation.

\end{proof}
In the following proposition, we will state conditions under which $f_{(\omega,\mathcal{V},F)} \in C^1(M,\mathbb{R}^N)$. This is especially interesting because a differentiable synchronisation preserves geometric information including the eigenvalues of fixed points and Lyapunov exponents of the underlying system. This results in a higher quality reconstruction \cite{Poggio2017,Mhaskar96neuralnetworks}.

\begin{prop}
    Let $\omega \in C^1(M,\mathbb{R})$, $\phi \in \text{Diff}^1(M)$, and $T_m\phi$ denote the differential of $\phi$ at $m \in M$. Let  $\sigma[A]_{\min}$ denote the smallest eigenvalue of $A$. Suppose there exists $c \in (0,1)$ and $K > 0$ such that
    \begin{align*}
        \sup_{m \in M}\lVert T_m \phi^{-t} \rVert \leq K e^{c\sigma[A]_{\min} t}, \qquad
        \sup_{m \in M}\lVert D \omega(m) \rVert < \infty, \qquad
        \sup_{m \in M}\lVert \omega(m) \rVert < \infty
    \end{align*}
    then the $\mathbb{R}^N$-generalised synchronisation $f_{(\omega,\mathcal{V},F)}$ is of class $C^1(M,\mathbb{R}^N)$.
\end{prop}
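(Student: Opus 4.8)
The plan is to differentiate the integral expression \eqref{GS} for $f_{(\omega,\mathcal{V},F)}$ under the integral sign and to verify that the resulting candidate derivative is well defined and continuous. Since $M$ is a manifold, I would first reduce to a local statement: fix $m_0 \in M$, pick a chart around $m_0$, and prove that $f_{(\omega,\mathcal{V},F)}$ composed with the chart inverse is continuously differentiable near the image of $m_0$; the tangent maps $T_m\phi^{-t}$ and $D\omega(m)$ appearing in the hypotheses are read in this fixed atlas (or via a background Riemannian metric), which is the sense in which $\sup_{m \in M}\lVert T_m\phi^{-t}\rVert$ is meant to be finite.

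The core of the argument is an application of the theorem on differentiation under the integral sign over the non-compact domain $\tau \in [0,\infty)$. Write $g(\tau,m) := e^{-A\tau}C\,\omega\phi^{-\tau}(m)$. For each fixed $\tau \geq 0$ the chain rule gives that $m \mapsto g(\tau,m)$ is $C^1$ with
\begin{align*}
    D_m g(\tau,m) = e^{-A\tau}C\, D\omega\big(\phi^{-\tau}(m)\big)\, T_m\phi^{-\tau},
\end{align*}
because $\omega \in C^1(M,\mathbb{R})$ and $\phi^{-\tau} \in \text{Diff}^1(M)$. Since $A$ is symmetric positive definite, $\lVert e^{-A\tau}\rVert = e^{-\sigma[A]_{\min}\tau}$, so the three hypotheses yield the uniform-in-$m$ bound
\begin{align*}
    \lVert D_m g(\tau,m)\rVert \leq e^{-\sigma[A]_{\min}\tau}\,\lVert C\rVert\, \Big(\sup_{m\in M}\lVert D\omega(m)\rVert\Big)\, K e^{c\sigma[A]_{\min}\tau} = K'\, e^{-(1-c)\sigma[A]_{\min}\tau},
\end{align*}
which, since $c \in (0,1)$ and $\sigma[A]_{\min} > 0$, is an integrable function of $\tau$ on $[0,\infty)$ independent of $m$. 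An analogous estimate using $\sup_{m}\lVert\omega(m)\rVert < \infty$ shows $\int_0^\infty \lVert g(\tau,m)\rVert\, d\tau < \infty$, so $f_{(\omega,\mathcal{V},F)}$ is genuinely defined by \eqref{GS}. The dominated-convergence form of the Leibniz rule then gives that $f_{(\omega,\mathcal{V},F)}$ is differentiable with
\begin{align*}
    Df_{(\omega,\mathcal{V},F)}(m) = \int_0^\infty e^{-A\tau}C\, D\omega\big(\phi^{-\tau}(m)\big)\, T_m\phi^{-\tau}\, d\tau.
\end{align*}

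Finally I would check that this derivative is continuous in $m$. For each fixed $\tau$, the map $m \mapsto D_m g(\tau,m)$ is continuous, since $D\omega$ is continuous ($\omega \in C^1$), $m \mapsto T_m\phi^{-\tau}$ is continuous ($\phi^{-\tau} \in \text{Diff}^1$), and composition of these with the fixed linear maps $e^{-A\tau}$ and $C$ is continuous; the uniform integrable domination from the previous step lets me pass a limit $m_n \to m$ inside the integral by dominated convergence, giving continuity of $Df_{(\omega,\mathcal{V},F)}$. Hence $f_{(\omega,\mathcal{V},F)} \in C^1(M,\mathbb{R}^N)$. The main obstacle I anticipate is not a single calculation but the bookkeeping that makes the ``differentiate under the integral sign'' step rigorous on a manifold: one must ensure the domination is locally uniform in $m$ (here it is globally uniform), that the tangent-space norms used in the hypotheses match those used in the estimates, and that differentiability established chart-by-chart patches to a global $C^1$ statement. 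Note that the exponential-gap condition $\sup_m\lVert T_m\phi^{-t}\rVert \le Ke^{c\sigma[A]_{\min}t}$ with $c<1$ is exactly what is needed to beat the decay $e^{-\sigma[A]_{\min}\tau}$ of $e^{-A\tau}$ and keep the differentiated integrand integrable, so there is essentially no slack in the hypotheses to exploit.
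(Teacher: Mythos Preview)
Your proposal is correct and follows essentially the same approach as the paper: both differentiate the explicit formula \eqref{GS} under the integral sign, use $\lVert e^{-A\tau}\rVert = e^{-\sigma[A]_{\min}\tau}$ together with the chain rule $D(\omega\phi^{-\tau})(m) = D\omega(\phi^{-\tau}(m))\,T_m\phi^{-\tau}$ and the hypothesis $\sup_m\lVert T_m\phi^{-t}\rVert \le Ke^{c\sigma[A]_{\min}t}$ to produce the integrable majorant $K'e^{-(1-c)\sigma[A]_{\min}\tau}$, and conclude $C^1$ regularity. The paper packages the uniform-convergence step into a Weierstrass $M$-test lemma while you invoke dominated convergence directly, but the substance is the same.
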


\begin{proof}
    Our goal is to prove that both $f_{(\omega,\mathcal{V},F)}$ and the derivative $Df_{(\omega,\mathcal{V},F)}$ exist and are continuous. First of all, $f_{(\omega,\mathcal{V},F)}$ exists and is continuous if
    \begin{align*}
        \int_0^{\infty} \sup_{m \in M} \lVert e^{-A\tau}C\omega\phi^{-\tau}(m) \rVert \ d\tau
    \end{align*}
    is finite, by lemma \ref{M_test_lemma}. To prove the integral is finite we observe that
        \begin{align*}
        & \int_0^{\infty} \sup_{m \in M}\lVert e^{-A\tau}C(\omega\phi^{-\tau})(m) \rVert \ d\tau \\
        &\leq \int_0^{\infty} \lVert e^{-A\tau}C \rVert \ d\tau \  \sup_{m \in M} \lVert \omega(m) \rVert \\
        &\leq \int_0^{\infty} e^{-\sigma[A]_{\min} \tau} \ d\tau \ \lVert C \rVert   \sup_{m \in M} \lVert \omega(m) \rVert \text{ by lemma \ref{eig_lemma}} \\
        &\leq \frac{\lVert C \rVert}{\sigma[A]_{\min}} \sup_{m \in M} \lVert \omega(m) \rVert.
    \end{align*}
    The derivative $Df_{(\omega,\mathcal{V},F)}$ exists and is continuous if
    \begin{align*}
         \int_0^{\infty} \sup_{m \in M} \lVert e^{-A\tau}CD(\omega\phi^{-\tau})(m) \rVert \ d\tau
    \end{align*}
    is finite, using lemma \ref{M_test_lemma} once again. To prove this second integral is finite we observe that
    \begin{align*}
        &\int_0^{\infty} \sup_{m \in M} \lVert e^{-A\tau}CD(\omega\phi^{-\tau}) \rVert \ d\tau(m) \\
        \leq& \int_0^{\infty} \lVert e^{-A\tau} \rVert  \lVert C \rVert \sup_{m \in M}\lVert D(\omega\phi^{-\tau})(m) \rVert \ d\tau \\
        \leq& \lVert C \rVert \int_0^{\infty} e^{-\sigma[A]_{\min} \tau} \sup_{m \in M}\lVert D(\omega\phi^{-\tau})(m) \rVert \ d\tau \ \text{ by lemma \ref{eig_lemma}}
        \\
        =& \lVert C \rVert \int_0^{\infty} e^{-\sigma[A]_{\min} \tau} \sup_{m \in M}\lVert D\omega\phi^{-\tau}(m) T_m \phi^{-\tau} \rVert \ d\tau \\
        \leq& \lVert C \rVert \int_0^{\infty} e^{-\sigma[A]_{\min} \tau} \sup_{m \in M}\lVert D\omega(m) \rVert \sup_{m \in M} \lVert T_m \phi^{-\tau} \rVert \ d\tau \\
        \leq& \lVert C \rVert \int_0^{\infty} K e^{-(1-c)\sigma[A]_{\min} \tau}  \ d\tau \ \sup_{m \in M}\lVert D\omega(m) \rVert \\
        =&  \frac{K \lVert C \rVert}{(1-c)\sigma[A]_{\min} } \sup_{m \in M}\lVert D\omega(m) \rVert.
    \end{align*}
\end{proof}

\subsection{Embedding the fixed points}

In a highly influential paper written in 1981, Floris Takens \cite{TakensThm} proved that the delay observation map is an embedding for a generic pair $(\omega,\phi)$ of observation functions $\omega$ and discrete time source system $\phi$. This launched the field of embedology \cite{Sauer1991} which explores novel and useful ways to embed a source system into $\mathbb{R}^N$ using only a time series of scalar (or low dimensional) observations of the source system. Many authors \cite{embedding_and_approximation_theorems} \cite{grig_2021} \cite{Verzelli} \cite{1556081} \cite{4118282} have observed that reservoir computing with observations from a dynamical system is a special type of embedology; as long as the generalised synchronisation $f_{(\omega,\mathcal{V},F)}$ is an embedding.

It is especially desirable that the generalised synchronisation $f_{(\omega,\mathcal{V},F)}$ is an embedding because this property allows a topologically faithful reconstruction of the source dynamics, which allows for future forecasting and other forms of learning. In fact an embedding creates a vector field in the reservoir space that is diffeomorphic to the vector field of the source system. It is very challenging to prove that a generalised synchronisation is embedding globally on $M$, so in this paper we prove a simpler result: that for generic observation functions $\omega$ the generalised synchronisation $f_{(\omega,\mathcal{V},F)}$ is an embedding on the isolated fixed points of $\mathcal{V}$. First of all this simpler result is a necessary first step in the quest to prove a global embedding. Moreover, isolated fixed points are of particular interest in their own right because they are each contained by a small neighbourhood of $M$ on which the vector field $\mathcal{V}$ is approximately linear. Furthermore, when a trajectory of the source dynamics is at a point $m$ close to a fixed point $m^*$, a nonlinear reservoir system converges to the linear reservoir system
\begin{align*}
    \dot{x}(t) &=  D_x F( f_{(\omega,\mathcal{V},F)}(m^*) , \omega(m^*)) x(t) + D_z F( f_{(\omega,\mathcal{V},F)}(m^*) , \omega(m^*) ) \omega\phi^t(m) \\ 
    &= -A x(t) + C \omega\phi^t(m)
\end{align*}
where
\begin{align*}
    A &= D_x F( f_{(\omega,\mathcal{V},F)}(m^*) , \omega(m^*)), \\ C &= D_z F( f_{(\omega,\mathcal{V},F)}(m^*) , \omega(m^*) ).
\end{align*}
We can easily evaluate the GS evaluated at a fixed point $m^*$
\begin{align*}
    f_{(\omega,\mathcal{V},F)}(m^*) &= \int^{\infty}_0 e^{-A \tau} C \omega(m^*) d \tau = A^{-1} C \omega(m^*).
\end{align*}

Now, before we prove that $f_{(\omega,\mathcal{V},F)}$ is an embedding of the fixed points for generic $\omega \in C^1(M,\mathbb{R})$, we will define an immersion, an embedding, and the word generic.

\begin{defn}
    Let $T_m M$ denote the tangent space of $M$ at $m$. A map $f : M \to \mathbb{R}^N$ is an immersion at $m \in M$ if the derivative $Df(m) : T_m M \to \mathbb{R}^N$ exists and is full rank.  
\end{defn}

\begin{defn}
    Let $X$ be a compact subset of $M$. Then a map $f : M \to \mathbb{R}^N$ is an embedding on $X$ if $f$ is an immersion for each $m \in X$ and the restriction $f\rvert_X$ is injective. 
\end{defn}

\begin{remark}
    If $X$ is a compact subset of $M$ and $f : M \to \mathbb{R}^N$ is an embedding on $X$ then there is an open subset $\Omega \subset M$ such that $X \subset \Omega \subset M$ and the restriction $f\rvert_{\Omega}$ is an embedding. Furthermore, $f\rvert_{\Omega}$ is a diffeomorphism onto its image.
\end{remark}

\begin{defn}
    Let $X$ be compact subset of $M$. A property that holds on a dense open subset of $X$ is a generic property of $X$.
\end{defn}

Now to prove that $f_{(\omega,\mathcal{V},F)}$ is an embedding on the fixed points for generic $\omega \in C^1(M,\mathbb{R})$ we will prove:
\begin{enumerate}
    \item In lemma \ref{open_lemma} that $f_{(\omega,\mathcal{V},F)}$ is an embedding on the fixed points for observation functions on an open (possibly empty) subset of $C^1(M,\mathbb{R})$.
    \item In theorem \ref{embedding_thm} that $f_{(\omega,\mathcal{V},F)}$ is an embedding on the fixed points for observation functions in a dense subset of $C^1(M,\mathbb{R})$. This combined with lemma \ref{open_lemma} completes the proof.
\end{enumerate}

Our strategy is very similar to Huke's \cite{Hukes_thm} strategy to prove Takens' Theorem, and the strategy appearing in \cite{grig_2021} to prove a similar result in the discrete time case. 

\begin{lemma}
    \label{open_lemma}
     Let $\omega \in C^1(M,\mathbb{R})$, $\phi \in \text{Diff}^1(M)$, and $T_m\phi$ denote the differential of $\phi$ at $m \in M$. Let  $\sigma[A]_{\min}$ denote the smallest eigenvalue of $A$. Suppose there exists $c \in (0,1)$ and $K > 0$ such that
    \begin{align*}
        \sup_{m \in M}\lVert T_m \phi^{-t} \rVert \leq K e^{c\sigma[A]_{\min} t}, \qquad
        \sup_{m \in M}\lVert D \omega(m) \rVert < \infty, \qquad
        \sup_{m \in M}\lVert \omega(m) \rVert < \infty
    \end{align*}
    Let $X$ be a compact subset of $M$. Then for an open (possibly empty) subset of $C^1(M,\mathbb{R})$ the GS $f_{(\omega,\mathcal{V},F)}$ is an embedding on $X$.  
\end{lemma}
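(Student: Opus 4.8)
The claim is that the set $\mathcal{U} := \{\omega \in C^1(M,\mathbb{R}) : f_{(\omega,\mathcal{V},F)} \text{ is an embedding on } X\}$ is open in $C^1(M,\mathbb{R})$. The plan is to factor this set through the assignment $\Phi : \omega \mapsto f_{(\omega,\mathcal{V},F)}$, show that $\Phi$ is a continuous map into $C^1(M,\mathbb{R}^N)$, and then invoke the classical fact that ``being an injective immersion on a fixed compact set'' is an open condition in the $C^1$ topology; since by definition $\mathcal{U} = \Phi^{-1}(\mathcal{E}_X)$ with $\mathcal{E}_X := \{g \in C^1(M,\mathbb{R}^N) : g \text{ is an immersion at every } m \in X \text{ and } g|_X \text{ is injective}\}$, openness of $\mathcal{U}$ follows.

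First I would establish the continuity of $\Phi$. From the closed-form expression \eqref{GS} the map $f_{(\omega,\mathcal{V},F)}$ depends linearly on $\omega$, and the estimates already obtained in the preceding proposition give $\sup_{m\in M}\lVert f_{(\omega,\mathcal{V},F)}(m)\rVert \leq \tfrac{\lVert C\rVert}{\sigma[A]_{\min}}\sup_{m\in M}\lVert \omega(m)\rVert$ and $\sup_{m\in M}\lVert Df_{(\omega,\mathcal{V},F)}(m)\rVert \leq \tfrac{K\lVert C\rVert}{(1-c)\sigma[A]_{\min}}\sup_{m\in M}\lVert D\omega(m)\rVert$. Together with linearity in $\omega$, these bounds show $\Phi$ is a bounded linear operator, hence continuous, with respect to the uniform $C^1$ norms (equivalently, since we only care about the behaviour of $f_{(\omega,\mathcal{V},F)}$ on $X$, with respect to the weak $C^1$ topology obtained by restricting to a fixed compact neighbourhood of $X$).

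Next I would record the openness of $\mathcal{E}_X$. The immersion clause is the easy half: full rank of $Dg(m)$ is witnessed by the non-vanishing of some $(\dim M)$-minor, which persists under $C^1$-small perturbations of $g$ and under small displacements of $m$, and compactness of $X$ upgrades this to a uniform $C^1$-neighbourhood. For the injectivity clause I would argue by contradiction in the spirit of Huke's proof: if $g \in \mathcal{E}_X$ but there is a sequence $g_n \to g$ in $C^1$ with $g_n \notin \mathcal{E}_X$, then by openness of the immersion condition we may assume each $g_n$ is an immersion on $X$, so it must be that $g_n|_X$ is not injective, i.e. there are $p_n \neq q_n$ in $X$ with $g_n(p_n) = g_n(q_n)$; passing to subsequences, $p_n \to p$ and $q_n \to q$ in the compact set $X$, and uniform convergence forces $g(p) = g(q)$, hence $p = q$ by injectivity of $g|_X$. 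Since $g$ is an immersion near $p$, in a chart around $p$ there is $\kappa > 0$ with $\lVert Dg(x)v\rVert \geq \kappa\lVert v\rVert$; for $n$ large $\lVert Dg_n(x)v\rVert \geq \tfrac{\kappa}{2}\lVert v\rVert$ on a fixed ball $B(p,\delta)$, and integrating along segments in the chart shows $g_n$ is injective on $B(p,\delta)$. But eventually both $p_n$ and $q_n$ lie in $B(p,\delta)$, contradicting $g_n(p_n) = g_n(q_n)$ with $p_n \neq q_n$. This proves $\mathcal{E}_X$ is open, and combining with the continuity of $\Phi$ gives that $\mathcal{U} = \Phi^{-1}(\mathcal{E}_X)$ is open.

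The main obstacle is the third step: the uniform-injectivity argument that turns ``injective immersion on a compact set'' into an open condition, since injectivity alone is not open and one must exploit both compactness of $X$ and the local quantitative control provided by the immersion hypothesis. Everything else is either a direct consequence of the estimates already proven for $f_{(\omega,\mathcal{V},F)} \in C^1$ or genuinely routine; the only other point requiring care is the precise choice of topology on $C^1(M,\mathbb{R})$ when $M$ is non-compact, which is dealt with by restricting attention to a compact neighbourhood of $X$ (consistent with the standing boundedness hypotheses on $\omega$ and $D\omega$).
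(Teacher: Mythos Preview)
Your proposal is correct and follows essentially the same strategy as the paper: factor through the map $\omega \mapsto f_{(\omega,\mathcal{V},F)}$, establish its continuity into $C^1(M,\mathbb{R}^N)$, and use that embeddings on a compact set form an open subset of $C^1(M,\mathbb{R}^N)$. The only differences are in execution: the paper cites Hirsch (Theorem~1.4) for the openness of $\mathcal{E}_X$ whereas you supply a direct Huke-style argument, and the paper carries out an explicit $\epsilon$--$\delta$ estimate for continuity whereas you more efficiently observe that $\Phi$ is linear in $\omega$ and invoke the same bounds to conclude it is a bounded linear operator.
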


\begin{proof}
    Embeddings on $X$ are an open set in $C^1(M,\mathbb{R}^N)$ by Theorem 1.4 in \cite{Hirsch:book}. Furthermore, the inverse image of any open set under a continuous map is open. Thus, we will show that the map $\Psi : C^1(M,\mathbb{R}) \to C^1(M,\mathbb{R}^N)$ defined by
    \begin{align*}
        \Psi(\omega) = f_{(\omega,\mathcal{V},F)} 
    \end{align*}
    is continuous, and this will complete the proof. To this end, consider a sequence $\{\omega_n\}_{n \in \mathbb{N}}$ that converges to $\omega_*$ in the $C^1(M,\mathbb{R})$ topology. Then for any $\epsilon > 0$ there exists $n' > 0$ such that for any $n > n'$
    \begin{align*}
        &\sup_{m \in M}\lVert \omega_n(m) - \omega_*(m) \rVert < \frac{\epsilon\sigma[A]_{\min}}{2\lVert C \rVert}, \\ 
        &\sup_{m \in M}\lVert D\omega_n(m) - D\omega_*(m) \rVert < \frac{\epsilon(1-c)\sigma[A]_{\min}}{2 K \lVert C \rVert}.
    \end{align*}
    Then our goal is to establish that $\Psi(\omega_n)$ converges to $\Psi(\omega_*)$ in the $C^1(M,\mathbb{R}^N)$ topology, i.e. show that
    \begin{align*}
        \lVert \Psi(\omega_n) - \Psi(\omega_*) \rVert_{C^1(M,\mathbb{R})} < \epsilon.
    \end{align*}
    Now
    \begin{align*}
        &\lVert \Psi(\omega_n) - \Psi(\omega_*) \rVert_{C^1(M,\mathbb{R})} \\
        =&\lVert f_{(\omega_n,\mathcal{V},F)} - f_{(\omega^*,\mathcal{V},F)}  \rVert_{C^1(M,\mathbb{R}^N)} \\
        =&\bigg\lVert \int_{0}^{\infty}e^{-A\tau}C(\omega_n\phi^{-\tau} - \omega^*\phi^{-\tau}) \ d\tau \bigg\rVert_{C^1(M,\mathbb{R}^N)} \\
        \leq& \int_0^{\infty} \lVert e^{-A\tau}C(\omega_n\phi^{-\tau} - \omega^*\phi^{-\tau}) \rVert_{C^1(M,\mathbb{R}^N)} \ d\tau \\
        \leq& \int_0^{\infty} \sup_{m \in M} \lVert e^{-A\tau}C(\omega_n(m) - \omega^*(m)) \rVert \ d\tau \\ 
        &\qquad +\int_0^{\infty} \sup_{m \in M} \lVert e^{-A\tau}C(D(\omega_n\phi^{-\tau})(m) - D(\omega^*\phi^{-\tau})(m)) \rVert \ d\tau \\
        \leq& \int_0^{\infty} \lVert e^{-A\tau} \rVert \ d\tau \ \lVert C \rVert \sup_{m \in M} \lVert \omega_n(m) - \omega^*(m) \rVert\\
        &\qquad + \lVert C \rVert \int_0^{\infty} \lVert e^{-A\tau} \rVert \sup_{m \in M} \lVert (D(\omega_n\phi^{-\tau})(m) - D(\omega^*\phi^{-\tau})(m)) \rVert \ d\tau \\
        &\leq \lVert C \rVert  \int_0^{\infty} e^{-\sigma[A]_{\min} \tau} \ d\tau \ \sup_{m \in M}\lVert \omega_n(m) - \omega^*(m) \rVert \\
        &\qquad +\lVert C \rVert  \int_0^{\infty} e^{-\sigma[A]_{\min} \tau} \sup_{m \in M}\lVert (D(\omega_n\phi^{-\tau})(m) - D(\omega^*\phi^{-\tau})(m)) \rVert \ d\tau \\ \text{ by lemma \ref{eig_lemma}}
        \end{align*}
        \begin{align*}
        &\leq \frac{\lVert C \rVert}{\sigma[A]_{\min}} \sup_{m \in M}\lVert \omega_n(m) - \omega^*(m) \rVert \\
        &\qquad +\lVert C \rVert  \int_0^{\infty} e^{-\sigma[A]_{\min} \tau} \sup_{m \in M}\lVert (D\omega_n\phi^{-\tau}(m) - D\omega^*\phi^{-\tau}(m)) T_m \phi^{-\tau} \rVert \ d\tau \\
        &\leq \frac{\lVert C \rVert}{\sigma[A]_{\min}} \sup_{m \in M}\lVert \omega_n(m) - \omega^*(m) \rVert \\
        &\qquad +\lVert C \rVert  \int_0^{\infty} e^{-\sigma[A]_{\min} \tau} \sup_{m \in M} \lVert T_m \phi^{-\tau} \rVert \ d\tau \ \sup_{m \in M}\lVert (D\omega_n(m) - D\omega^*(m)) \\
        &\leq \frac{\lVert C \rVert}{\sigma[A]_{\min}} \sup_{m \in M}\lVert \omega_n(m) - \omega^*(m) \rVert \\
        &\qquad +\lVert C \rVert \int_0^{\infty} K e^{-(1-c)\sigma[A]_{\min} \tau} \ d\tau \ \sup_{m \in M}\lVert (D\omega_n(m) - D\omega^*(m)) \\
        &\leq \frac{\lVert C \rVert}{\sigma[A]_{\min}} \sup_{m \in M}\lVert \omega_n(m) - \omega^*(m) \rVert \\
        &\qquad +\frac{K \lVert C \rVert}{(1-c)\sigma[A]_{\min}} \ \sup_{m \in M}\lVert (D\omega_n(m) - D\omega^*(m)) \\
        &< \frac{\epsilon}{2} + \frac{\epsilon}{2} = \epsilon.
    \end{align*}
\end{proof}

To show that $f_{(\omega,\mathcal{V},F)}$ is embedding about the fixed points for a dense set of observation functions, we will prove that for an arbitrary observation function $\omega$, we can always make an arbitrarily small perturbation $\omega'$ such that the resulting generalised synchronisation $f_{(\omega',\mathcal{V},F)}$ is an embedding on the fixed points

\begin{theorem} 
\label{embedding_thm}
 Let $\omega \in C^1(M,\mathbb{R})$, $\phi \in \text{Diff}^1(M)$, and $T_m\phi$ denote the differential of $\phi$ at $m \in M$. Let  $\sigma[A]_{\min}$ denote the smallest eigenvalue of $A$. Suppose there exists $c \in (0,1)$ and $K > 0$ such that
    \begin{align*}
        \sup_{m \in M}\lVert T_m \phi^{-t} \rVert \leq K e^{c\sigma[A]_{\min} t}, \qquad
        \sup_{m \in M}\lVert D \omega(m) \rVert < \infty, \qquad
        \sup_{m \in M}\lVert \omega(m) \rVert < \infty
    \end{align*}
    Let the dimension of $M$ be $q$ and dimension of the reservoir space $N \geq q$. Suppose that the smooth vector field $\mathcal{V}$ on $M$ admits a finite number of fixed points. For each fixed point $m \in M$ let $J_m : T_m M \to T_m M$ denote the Jacobian at $m$. Suppose that for each fixed point $m$
    \begin{enumerate}
        \item The eigenvalues of $J_m$ denoted $\lambda_1, \ldots \lambda_q$ are distinct.
        \item The vectors 
        \begin{align*}
            \{ (A + \lambda_j \mathbb{I})^{-1}C \}_{j = 1, \ldots, q} 
        \end{align*}
        are linearly independent.
    \end{enumerate}
    Then for generic $\omega \in C^1(M,\mathbb{R})$ the GS $f_{(\omega,\mathcal{V},F)}$ is an embedding on the fixed points. 
\end{theorem}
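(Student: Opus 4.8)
The plan is to combine Lemma~\ref{open_lemma}, which already shows that the set of observation functions for which $f_{(\omega,\mathcal{V},F)}$ is an embedding on the fixed points is open in $C^1(M,\mathbb{R})$, with a density statement, which is what remains. Since $\mathcal{V}$ has only finitely many fixed points $m_1,\dots,m_p$, and this set is therefore compact, being an embedding on it amounts to two finite families of conditions: (i) $f_{(\omega,\mathcal{V},F)}$ is an immersion at each $m_i$, and (ii) the points $f_{(\omega,\mathcal{V},F)}(m_1),\dots,f_{(\omega,\mathcal{V},F)}(m_p)$ are pairwise distinct. I would prove density by showing that from any admissible $\omega$ one can make an arbitrarily small $C^1$ perturbation for which (i) and (ii) both hold.

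First I would translate (i) and (ii) into explicit conditions on $\omega$ at the fixed points. By the preceding $C^1$ proposition (whose hypotheses coincide with those assumed here) $f_{(\omega,\mathcal{V},F)}\in C^1$, and one may differentiate $f_{(\omega,\mathcal{V},F)}(m)=\int_0^\infty e^{-A\tau}C\,\omega\phi^{-\tau}(m)\,d\tau$ under the integral sign. At a fixed point $m^{*}$ one has $\phi^{-\tau}(m^{*})=m^{*}$ and $T_{m^{*}}\phi^{-\tau}=e^{-\tau J_{m^{*}}}$, so $f_{(\omega,\mathcal{V},F)}(m^{*})=A^{-1}C\,\omega(m^{*})$ and $Df_{(\omega,\mathcal{V},F)}(m^{*})=\int_0^\infty e^{-A\tau}C\,D\omega(m^{*})\,e^{-\tau J_{m^{*}}}\,d\tau$. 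Using hypothesis~1, diagonalise $J_{m^{*}}=P\Lambda P^{-1}$ with $\Lambda=\operatorname{diag}(\lambda_1,\dots,\lambda_q)$, the columns $v_1,\dots,v_q$ of $P$ being the eigenvectors and the rows $w_1^{\top},\dots,w_q^{\top}$ of $P^{-1}$ the dual basis, so $e^{-\tau J_{m^{*}}}=\sum_j e^{-\lambda_j\tau}v_jw_j^{\top}$; since the growth bound $\sup_m\lVert T_m\phi^{-t}\rVert\le Ke^{c\sigma[A]_{\min}t}$ with $c<1$ forces $\mathrm{Re}\,\lambda_j>-\sigma[A]_{\min}$, every eigenvalue of $A+\lambda_j\mathbb{I}$ has positive real part and $\int_0^\infty e^{-(A+\lambda_j\mathbb{I})\tau}\,d\tau=(A+\lambda_j\mathbb{I})^{-1}$. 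Hence
\[
Df_{(\omega,\mathcal{V},F)}(m^{*})=\sum_{j=1}^{q}\big(D\omega(m^{*})v_j\big)\,(A+\lambda_j\mathbb{I})^{-1}C\,w_j^{\top}=U\,\operatorname{diag}\!\big(D\omega(m^{*})v_1,\dots,D\omega(m^{*})v_q\big)\,P^{-1},
\]
where $U=[\,(A+\lambda_1\mathbb{I})^{-1}C\mid\cdots\mid(A+\lambda_q\mathbb{I})^{-1}C\,]$. Hypothesis~2 makes $U$ have rank $q$ (so $N\ge q$), and $P^{-1}$ is invertible, so $Df_{(\omega,\mathcal{V},F)}(m^{*})$ has rank $q$ exactly when $D\omega(m^{*})v_j\neq 0$ for every $j$. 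Similarly, hypothesis~2 forces $C\neq 0$, so $A^{-1}C\neq 0$ and $f_{(\omega,\mathcal{V},F)}(m_i)=f_{(\omega,\mathcal{V},F)}(m_k)$ iff $\omega(m_i)=\omega(m_k)$. Thus (i)$\wedge$(ii) is equivalent to: $D\omega$ is nonzero against each eigendirection $v_j$ at each fixed point, and $\omega$ separates the fixed points. (When $J_{m^{*}}$ has non-real eigenvalues one carries out the decomposition over $\mathbb{C}$, treating conjugate pairs together; $Df_{(\omega,\mathcal{V},F)}(m^{*})$ and the resulting genericity conditions stay real.)

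Next I would prove density by a localised perturbation. Around each fixed point $m_i$ choose disjoint coordinate charts and bump functions $\rho_i$ equal to $1$ near $m_i$ and supported in the chart, and set $\omega'=\omega+\sum_i\rho_i\big(c_0^{(i)}+\langle c_1^{(i)},\,\cdot-m_i\rangle\big)$ in local coordinates, with small $c_0^{(i)}\in\mathbb{R}$ and $c_1^{(i)}\in\mathbb{R}^{q}$. Then $\omega'(m_i)=\omega(m_i)+c_0^{(i)}$ and $D\omega'(m_i)=D\omega(m_i)+(c_1^{(i)})^{\top}$, with no change at the other fixed points. For each $i$ the bad $c_0^{(i)}$ (those making $\omega'(m_i)$ equal to some $\omega(m_k)$) form a finite set, and for each $j$ the bad $c_1^{(i)}$ (those with $(D\omega(m_i)+(c_1^{(i)})^{\top})v_j=0$) form a proper affine subspace of $\mathbb{R}^{q}$; their union is nowhere dense, so arbitrarily small admissible choices exist, simultaneously for all $i$. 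The resulting $\omega'$ is arbitrarily $C^1$-close to $\omega$, still satisfies the standing hypotheses (the perturbation has small bounded $C^1$ norm and the $T_m\phi^{-t}$ bound does not involve $\omega$), and $f_{(\omega',\mathcal{V},F)}$ is an immersion at each $m_i$ and injective on $\{m_1,\dots,m_p\}$, hence an embedding on the fixed points. So this property is dense, and by Lemma~\ref{open_lemma} it is open, hence generic.

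The main obstacle is the second step: obtaining the closed form for $Df_{(\omega,\mathcal{V},F)}(m^{*})$ cleanly — justifying differentiation under the integral, carrying out the spectral decomposition of $J_{m^{*}}$ (where the distinctness of eigenvalues in hypothesis~1 is precisely what is needed for a Jordan-block-free formula), verifying convergence of $\int_0^\infty e^{-(A+\lambda_j\mathbb{I})\tau}\,d\tau$ from the growth bound, and extracting the rank criterion correctly, in particular tracking reality through complex-conjugate eigenvalue pairs. Once the criterion $D\omega(m^{*})v_j\neq 0$ and the separation condition are isolated, the density argument is routine finite-codimension transversality bookkeeping.
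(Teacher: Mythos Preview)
Your proposal is correct and follows the same overall architecture as the paper: density via localised bump-function perturbations at the fixed points, combined with openness from Lemma~\ref{open_lemma}. The injectivity step is essentially identical to the paper's (both reduce to separating the scalar values $\omega(m_i)$, since $f_{(\omega,\mathcal{V},F)}(m_i)=\omega(m_i)A^{-1}C$).

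The one substantive difference is in the immersion step. The paper perturbs by a bump function $\psi$ with $D\psi(m^*)v_j=1$ for every eigenvector $v_j$, computes $Df_{(\omega',\mathcal{V},F)}(m^*)v_j$ as a sum of an ``old'' term and $\epsilon(A+\lambda_j\mathbb{I})^{-1}C$, and then invokes Lemma~\ref{A-epsB_lemma} to get linear independence for some small $\epsilon$. You instead factor $Df_{(\omega,\mathcal{V},F)}(m^*)=U\,\operatorname{diag}(D\omega(m^*)v_1,\dots,D\omega(m^*)v_q)\,P^{-1}$ and read off the exact rank criterion $D\omega(m^*)v_j\neq 0$; density then follows because the bad $c_1^{(i)}$ lie in finitely many hyperplanes. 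Your route is a little cleaner --- it avoids Lemma~\ref{A-epsB_lemma} entirely and makes transparent why hypothesis~2 is exactly what is needed --- while the paper's route has the virtue of being more robust to situations where such a clean factorisation is unavailable. (In fact the paper's ``old'' term $\int_0^\infty e^{-(A+\lambda_j\mathbb{I})\tau}C\,D\omega(m^*)v_j\,d\tau$ is just the scalar $D\omega(m^*)v_j$ times $(A+\lambda_j\mathbb{I})^{-1}C$, so the two arguments collapse to the same computation once this is noticed.) One small point: when you say the bad $c_0^{(i)}$ are those making $\omega'(m_i)$ equal to some $\omega(m_k)$, you should compare against $\omega'(m_k)$, not $\omega(m_k)$; processing the fixed points sequentially fixes this.
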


\begin{proof}
    Let $m \in M$ be a fixed point. Suppose that $v_1, \ldots, v_q$ denote the eigenvectors of $J_m$ associated to the eigenvalues $\lambda_1, \ldots, \lambda_q$. Let $\psi \in C^{\infty}(M,\mathbb{R})$ be a smooth bump function with support that contains $m$ and no other fixed point. Furthermore suppose that
    \begin{align*}
        D\psi(m) = w^{\top}
    \end{align*}
    where $w$ is the unique solution to
    \begin{align*}
        \begin{bmatrix}
            v_1^\top \\
            v_2^{\top} \\
            \vdots \\
            v_q^{\top}
        \end{bmatrix}
        w = 
        \begin{bmatrix}
            1 \\
            1 \\
            \vdots \\
            1
        \end{bmatrix}
    \end{align*}
    Now define for $\epsilon > 0$  the perturbed observation function
    \begin{align*}
        \omega'(m) = \omega + \epsilon \psi(m).
    \end{align*}
    Then the perturbed GS has the form
    \begin{align*}
        f_{(\omega',\mathcal{V},F)}(m) = \int_0^{\infty} e^{-A\tau} C \omega'\phi^{-\tau}(m) \ d\tau
    \end{align*}
    so
    \begin{align*}
        D f_{(\omega',\mathcal{V},F)}(m) &= \int_0^{\infty} e^{-A\tau} C D(\omega'\phi^{-\tau})(m) \ d\tau \\
        &= \int_0^{\infty} e^{-A\tau} C D\omega'(m)T_m\phi^{-\tau} \ d\tau \\
        &= \int_0^{\infty} e^{-A\tau} C D\omega(m)T_m\phi^{-\tau} \ d\tau + \epsilon\int_0^{\infty} e^{-A\tau} C w^{\top} T_m\phi^{-\tau} \ d\tau.
    \end{align*}
    Now $m$ is a fixed point so
    \begin{align*}
        T_m\phi^{-\tau} = e^{-J_m \tau}.
    \end{align*}
    Now the eignevectors $v_j$ of the Jacobian $J_m$ are linearly independent so it suffices to find arbitrarily small $\epsilon > 0$ such that
    \begin{align*}
        \{ Df_{(\omega',\mathcal{V},F)} v_j \}_{j=1,\ldots,q}
    \end{align*}
    are linearly independent. Now
    \begin{align*}
        Df_{(\omega',\mathcal{V},F)}(m)v_j &= \int_0^{\infty} e^{-A\tau} C D\omega(m)T_m\phi^{-\tau} v_j \ d\tau + \epsilon\int_0^{\infty} e^{-A\tau} C w^{\top} T_m\phi^{-\tau} v_j \ d\tau \\
        &= \int_0^{\infty} e^{-A\tau} C D\omega(m)e^{-J_m \tau} v_j \ d\tau + \epsilon \int_0^{\infty} e^{-A\tau} C w^{\top} e^{-J_m \tau} v_j \ d\tau \\
        &= \int_0^{\infty} e^{-A\tau} C D\omega(m)e^{-\lambda_j \tau} v_j \ d\tau + \epsilon \int_0^{\infty} e^{-A\tau} C w^{\top} e^{-\lambda_j \tau} v_j \ d\tau \\
        &= \int_0^{\infty} e^{-(A+\lambda_j\mathbb{I})\tau} C D\omega(m) v_j \ d\tau + \epsilon \int_0^{\infty} e^{-(A+\lambda_j\mathbb{I})\tau} C w^{\top} v_j \ d\tau \\
        &= \int_0^{\infty} e^{-(A+\lambda_j\mathbb{I})\tau} C D\omega(m) v_j \ d\tau + \epsilon \int_0^{\infty} e^{-(A+\lambda_j\mathbb{I})\tau} C \ d\tau \\
        &= \int_0^{\infty} e^{-(A+\lambda_j\mathbb{I})\tau} C D\omega(m) v_j \ d\tau + \epsilon (A + \lambda_j \mathbb{I})^{-1} C.
    \end{align*}
    Now the vectors 
    \begin{align*}
        \{ (A + \lambda_j \mathbb{I})^{-1} C \}_{j = 1, \ldots q}
    \end{align*}
    are linearly independent by assumption. Hence we can choose a sufficiently small $\epsilon > 0$ so that
    \begin{align*}
        \bigg\{ \int_0^{\infty} e^{-(A+\lambda_j\mathbb{I})\tau} C D\omega(m) v_j \ d\tau + \epsilon (A + \lambda_j \mathbb{I})^{-1} C \bigg\}_{j = 1, \ldots , q}
    \end{align*}
    are linearly independent, by lemma \ref{A-epsB_lemma}. Thus we have an immersion of the point $m$. This immersion has no effect on the other fixed points, we so can repeat this procedure on each of the finitely many fixed points in turn without spoiling the immersion on any of the previous points. By Theorem 1.1 in \cite{Hirsch:book} the immersions of the fixed points form an open set, so any sufficiently small perturbation of the observation function will preserve the immersion on the fixed points. Thus, we will construct an arbitrarily small perturbation $\omega'$ of an arbitrary observation function $\omega$ which ensures that $f_{(\omega',\mathcal{V},F)}$ restricted to the fixed points is injective.
    
    For each fixed point $m_i \in M$ define a smooth bump function $\varphi_i \in C^{\infty}(M,\mathbb{R})$ such that $\varphi_i(m_i) = 1$ and $\text{supp}(\varphi_i)$ are disjoint. Then define the perturbed observation function 
    \begin{align*}
        \omega'(m) = \omega(m) + \sum_i \epsilon_i \varphi_i(m)
    \end{align*}
    for $\epsilon_i > 0$. Now $\omega'\phi^{-\tau}(m_i) = \omega'(m_i)$ for all $\tau$ because $m_i$ is a fixed point so
    \begin{align*}
        f_{(\omega',\varphi,F)}(m_i) &= \int_0^{\infty} e^{-A\tau} C \omega'\phi^{-\tau}(m_i) \ d\tau \\
        &= \omega'(m_i) \int_0^{\infty} e^{-A\tau} \ d\tau C \\
        &= \omega'(m_i) A^{-1} C \\
        &= (\omega(m_i) + \epsilon_i) A^{-1} C.
    \end{align*}
    Then we construct arbitrarily small $\{ \epsilon_i \}$ such that
    \begin{align*}
        \omega(m_i) + \epsilon_i = \omega(m_j) + \epsilon_j \iff i = j
    \end{align*}
    and this proves that $f_{(\omega,\mathcal{V},F)}$ is an embedding of the fixed points for a dense set of observation functions. Then lemma \ref{open_lemma} gives an embedding of the fixed points for generic $\omega$ and the proof is complete.
\end{proof}

One fundamental feature of reservoir computers that distinguish them from ordinary neural networks is that the reservoir neurons are randomly generated prior to training, and subsequently untouched by training. We are therefore interested in the likelihood that conditions 1. and 2. of Theorem \ref{embedding_thm} hold for randomly generated $A$ and $C$. It is shown in lemma \ref{conditions_hold_as} that the conditions hold almost surely for suitably randomly generated $A,C$.

\section{A Central Limit Theorem for Reservoir Computing}

If $f_{(\omega,\mathcal{V},F)}$ is injective, we can define the autonomous reservoir system
\begin{align}
    \dot{x} = F(x, \omega f_{(\omega,\mathcal{V},F)}^{-1}(x))
    \label{autonomous_system}
\end{align}
which in the linear case is
\begin{align*}
    \dot{x} = -Ax+ C \omega f_{(\omega,\mathcal{V},F)}^{-1}(x).
\end{align*}
If $f_{(\omega,\mathcal{V},F)}$ is an embedding on some open subset $\Omega \subset M$ then the autonomous system \eqref{autonomous_system} defines a vector field on $f(\Omega) \subset \mathbb{R}^N$ which is diffeomorphic to the source vector field $\mathcal{V}$ on $\Omega$.

We cannot evaluate the map $\omega f^{-1}_{(\omega,\mathcal{V},F)}$ directly, but we can approximate it with a random neural network $h : \mathbb{R}^N \times \Theta \to \mathbb{R}$, where $\Theta$ is a parameter space from which we draw random weights and biases. A common choice of `neuron' $h$ comprising a random neural network is
\begin{align*}
    h(x,\theta) = h(x,\alpha,\beta) = \tanh (\alpha^{\top} x + \beta )
\end{align*}
where $\alpha \in \mathbb{R}^N$ and $\beta \in \mathbb{R}$ are random weights and biases. Instead of using $\tanh$ we could use other activation functions including the $\text{relu}$ activation or radial basis activations. To improve generality we will approximate an arbitrary target function $u : \mathbb{R}^N \to \mathbb{R}$, motivated by the special case $u = \omega f^{-1}_{(\omega,\mathcal{V},F)}$. The strategy is to first randomly generate weights and biases $\{\theta_i\}_{1, \ldots, D}$, then take samples $x_j = f(m_j)$ from a trajectory in the reservoir space. Then minimise over $\{ w_i \in \mathbb{R} \}_{i , \ldots , D}$ the loss
\begin{align*}
    L(w_1, \ldots, w_{D}) = \frac{1}{\ell} \sum_{j = 1}^{\ell} \bigg \lVert u(x_j) - \frac{1}{D}\sum_{i=1}^D w_i h(x_j,\theta_i) \bigg \rVert^2
\end{align*}
then we hope
\begin{align}
    u(x) \approx \frac{1}{D}\sum_{i=1}^D w_i h(x,\theta_i) \label{approximation}
\end{align}
for all $x \in f(M)$. In this section, will study the error on the approximation \eqref{approximation} as we increase the number of neurons $D$. This is closely related to the work in \cite{Gonon2020} and \cite{Gonon2021}. We do not study the error on \eqref{approximation} as the number of sample points $\ell$ increases, which is addressed in \cite{HART2021132882} for discrete ergodic source systems.

We show in Theorem \ref{CLT_RNNs} that if the target $u(x)$ can be expressed as a weighted integral over $\theta \in \Theta$ of $h(x,\theta)$ then $h$ can approximate $u$ with asymptotically normal approximation error, with variance proportional to $1/D$. This follows directly from the central limit theorem (CLT), which we will state here.
\begin{theorem}
    (Central Limit Theorem) Let $(\Theta,\mathcal{F},\mathbb{P})$ be a probability space and $\{ \boldsymbol{Y}_i \}_{i \in \mathbb{N}}$ a collection of IID $\Theta$-valued random variables uniformly distributed with respect to $\mathbb{P}$. Suppose that
    \begin{align*}
        \mu := \mathbb{E}[\boldsymbol{Y}_i] \qquad \text{and} \qquad \sigma^2 := \text{Var}[\boldsymbol{Y}_i]
    \end{align*}
    are finite. Then
    \begin{align*}
        \mu - \frac{1}{D} \sum^N_{i = 1} \boldsymbol{Y}_i \xrightarrow[D \to \infty]{} \mathcal{N}(0,\sigma^2/D)
    \end{align*}
    in the sense that
    \begin{align*}
        \lim_{N \to \infty} \mathbb{P}\bigg[\mu - \frac{1}{D} \sum_{i=1}^D \boldsymbol{Y}_i \in (a,b) \bigg] = \frac{\sqrt{D}}{\sqrt{2\pi}\sigma}\int_a^b \exp\bigg( -\frac{z^2 D}{2\sigma^2} \bigg) \ dz
    \end{align*}
    for any real interval $(a,b) \subset \mathbb{R}$.
\end{theorem}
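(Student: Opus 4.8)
The plan is to prove this by the classical characteristic function argument combined with L\'evy's continuity theorem; the statement is just a rescaling of the textbook i.i.d.\ central limit theorem, so the work lies in recalling the standard steps rather than in any new idea, and one could alternatively simply cite a standard probability text. First I would reduce to the mean-zero case by setting $Z_i := \boldsymbol{Y}_i - \mu$, so that $\mathbb{E}[Z_i] = 0$ and $\mathrm{Var}[Z_i] = \sigma^2$. Since $\mu - \frac{1}{D}\sum_{i=1}^D \boldsymbol{Y}_i = -\frac{1}{\sqrt{D}} S_D$ where $S_D := \frac{1}{\sqrt{D}}\sum_{i=1}^D Z_i$, and since the limiting law $\mathcal{N}(0,\sigma^2/D)$ is symmetric, it suffices to show that $S_D$ converges in distribution to $\mathcal{N}(0,\sigma^2)$; the displayed integral on the right-hand side is exactly $\mathbb{P}[\mathcal{N}(0,\sigma^2/D) \in (a,b)]$, because the density of $\mathcal{N}(0,\sigma^2/D)$ is $\frac{\sqrt{D}}{\sqrt{2\pi}\sigma}\exp(-D z^2/(2\sigma^2))$. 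I would also note in passing that the hypothesis ``$\Theta$-valued'' is used only through the real quantities $\boldsymbol{Y}_i$ entering the sum, so the $\boldsymbol{Y}_i$ are implicitly real (or $\mathbb{R}$-valued measurable functions of $\Theta$-valued variables), and that the index $N$ in the sum should read $D$.

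Next I would analyse the common characteristic function $\varphi(t) := \mathbb{E}[e^{it Z_1}]$. The key elementary estimate is $\bigl| e^{ix} - 1 - ix + \tfrac{1}{2}x^2 \bigr| \le \min\{ |x|^3/6,\ x^2 \}$ for real $x$, which, combined with $\mathbb{E}[Z_1] = 0$, $\mathbb{E}[Z_1^2] = \sigma^2 < \infty$, and dominated convergence, yields the second-order expansion $\varphi(t) = 1 - \tfrac{1}{2}\sigma^2 t^2 + o(t^2)$ as $t \to 0$; note that only finiteness of the second moment is needed, not a third moment. By independence and identical distribution, the characteristic function of $S_D$ is $\varphi_{S_D}(t) = \varphi\bigl(t/\sqrt{D}\bigr)^{D}$, and substituting the expansion and letting $D \to \infty$ gives $\varphi_{S_D}(t) \to \exp(-\sigma^2 t^2/2)$ pointwise for each fixed $t$, using the fact that $\bigl(1 + a_D/D\bigr)^{D} \to e^{a}$ whenever $a_D \to a$.

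Finally I would invoke L\'evy's continuity theorem: since $\exp(-\sigma^2 t^2/2)$ is the characteristic function of $\mathcal{N}(0,\sigma^2)$ and is continuous at $t = 0$, the pointwise convergence of $\varphi_{S_D}$ to it implies that $S_D$ converges in distribution to $\mathcal{N}(0,\sigma^2)$. Unwinding the rescaling from the first step then gives the displayed convergence, since the Gaussian limit has a continuous distribution function (every point of $\mathbb{R}$ is a continuity point), so convergence in distribution upgrades to convergence of the probabilities $\mathbb{P}[\,\cdot \in (a,b)\,]$ on all open intervals. There is no serious obstacle here; the only point requiring minor care is the Taylor expansion of $\varphi$ under merely a second-moment hypothesis, which is precisely what the $\min$-bound above secures.
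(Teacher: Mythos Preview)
Your proof is correct and follows the classical characteristic-function route to the i.i.d.\ central limit theorem. However, the paper does not actually prove this statement: it is stated purely as background (the standard CLT) and then immediately invoked to derive the subsequent ``CLT for random neural networks'' corollary. In that sense your proposal supplies substantially more than the paper does---the paper treats the result as a citation, whereas you give the full L\'evy continuity argument with the second-moment Taylor bound. Both are valid ways to handle the statement; your version is self-contained, while the paper's ``approach'' is simply to take the CLT as known.
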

We are now are ready to state the result.
\begin{theorem} (CLT for random neural networks)
\label{CLT_RNNs}
Let $(\Theta,\mathcal{F},\mathbb{P})$ be a probability space and $x \in \mathbb{R}^N$. Let $h : \mathbb{R}^N \times \Theta \to \mathbb{R}$ be a map where $h(x,\cdot)$ is $\mathbb{P}$-measurable. Let $\boldsymbol{\theta}$ and $\{\boldsymbol{\theta}_i\}_{i \in \mathbb{N}}$ be real valued IID $\Theta$-valued random variables uniformly distributed with respect to $\mathbb{P}$. Suppose that the target function $u : \mathbb{R}^N \to \mathbb{R}$ is of the form 
\begin{align*}
    u(x) = \mathbb{E}[w(\boldsymbol{\theta})h(x,\boldsymbol{\theta})] = \int_{\Theta} w(\theta) h(x,\theta) \ d\mathbb{P}(\theta)
\end{align*}
for some $\mathbb{P}$-measurable $w : \Theta \to \mathbb{\mathbb{R}}$, and that
\begin{align*}
    \sigma^2(x) := \text{Var}[w(\boldsymbol{\theta})h(x,\boldsymbol{\theta})] = \int_{\Theta} \lvert w(\theta)h(x,\theta) - u(x)\rvert^2 \ d\mathbb{P}(\theta)
\end{align*}
is finite for all $x \in \mathbb{R}^N$. Then
\begin{align*}
    u(x) - \frac{1}{D}\sum_{i = 1}^D w(\boldsymbol{\theta}_i) h(x,\boldsymbol{\theta}_i) \xrightarrow[D \to \infty]{} \mathcal{N}(0,\sigma^2(x)/D)
\end{align*}
in the sense that
\begin{align*}
    \lim_{D \to \infty} \mathbb{P}\bigg[u(x) - \frac{1}{D}\sum_{i = 1}^D w(\boldsymbol{\theta}_i) h(x,\boldsymbol{\theta}_i) \in (a,b) \bigg] = \frac{\sqrt{D}}{\sqrt{2\pi}\sigma(x)}\int_a^b \exp\bigg( -\frac{z^2 D}{2\sigma^2(x)} \bigg) \ dz
\end{align*}
for any real interval $(a,b) \subset \mathbb{R}$.
\end{theorem}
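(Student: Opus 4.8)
The plan is to reduce the statement directly to the Central Limit Theorem quoted immediately above it. Fix $x \in \mathbb{R}^N$ and set $\boldsymbol{Y}_i := w(\boldsymbol{\theta}_i) h(x,\boldsymbol{\theta}_i)$ for each $i \in \mathbb{N}$. First I would check that the $\boldsymbol{Y}_i$ are genuine real-valued random variables: since $h(x,\cdot)$ and $w$ are both $\mathbb{P}$-measurable, the pointwise product $\theta \mapsto w(\theta)h(x,\theta)$ is $\mathbb{P}$-measurable, and composing this with the $\Theta$-valued random variable $\boldsymbol{\theta}_i$ yields a measurable real random variable. Because the $\boldsymbol{\theta}_i$ are IID and each $\boldsymbol{Y}_i$ is the same fixed measurable function of $\boldsymbol{\theta}_i$, the family $\{\boldsymbol{Y}_i\}_{i \in \mathbb{N}}$ is IID and uniformly distributed with respect to the push-forward of $\mathbb{P}$.

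Next I would verify the moment hypotheses of the Central Limit Theorem for $\{\boldsymbol{Y}_i\}$. By assumption $\sigma^2(x) = \text{Var}[w(\boldsymbol{\theta})h(x,\boldsymbol{\theta})] < \infty$, so $\mathbb{E}[\boldsymbol{Y}_i^2] < \infty$; by Cauchy--Schwarz (equivalently, Jensen's inequality) this forces $\mathbb{E}[\lvert \boldsymbol{Y}_i \rvert] < \infty$, so the mean $\mu := \mathbb{E}[\boldsymbol{Y}_i]$ is finite, and by the representation hypothesis on $u$ we have precisely $\mu = \mathbb{E}[w(\boldsymbol{\theta})h(x,\boldsymbol{\theta})] = u(x)$. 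Thus $(\mu,\sigma^2) = (u(x),\sigma^2(x))$ is finite, which is exactly what the Central Limit Theorem requires.

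Finally, applying the Central Limit Theorem to $\{\boldsymbol{Y}_i\}$ gives
\begin{align*}
    u(x) - \frac{1}{D}\sum_{i=1}^D w(\boldsymbol{\theta}_i) h(x,\boldsymbol{\theta}_i) = \mu - \frac{1}{D}\sum_{i=1}^D \boldsymbol{Y}_i \xrightarrow[D \to \infty]{} \mathcal{N}(0,\sigma^2(x)/D),
\end{align*}
with convergence understood in the distributional sense displayed in the statement, namely that $\lim_{D \to \infty} \mathbb{P}[\,\cdot \in (a,b)\,]$ equals the stated Gaussian integral for every real interval $(a,b) \subset \mathbb{R}$. Substituting back the definition of $\boldsymbol{Y}_i$ yields the claim. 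I do not expect any real obstacle here beyond the bookkeeping of measurability and the elementary implication that finite variance entails finite mean: the analytic content is entirely carried by the quoted CLT, and the only reason to state the result separately is that it isolates the hypothesis under which a random-feature expansion has asymptotically normal, $O(1/D)$-variance error at each fixed reservoir state $x$ --- that $u$ admits the integral representation $u(x) = \mathbb{E}[w(\boldsymbol{\theta})h(x,\boldsymbol{\theta})]$, which in the application is motivated by the target $u = \omega f_{(\omega,\mathcal{V},F)}^{-1}$.
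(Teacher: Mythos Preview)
Your proposal is correct and follows exactly the same approach as the paper: define $\boldsymbol{Y}_i := w(\boldsymbol{\theta}_i) h(x,\boldsymbol{\theta}_i)$ and invoke the quoted Central Limit Theorem. The paper's own proof is in fact just that one line, so your additional checks on measurability and the finite-variance-implies-finite-mean implication simply fill in routine details the paper leaves implicit.
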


\begin{proof}
    We let
    \begin{align*}
        \boldsymbol{Y}_i(x) := w(\boldsymbol{\theta}_i) h(x,\boldsymbol{\theta}_i)
    \end{align*}
    then the result follows immediately from the central limit theorem (CLT).
\end{proof}

Theorem \ref{CLT_RNNs} (roughly) states that when using a random neural network composed of $D$ neurons to approximate a target function $u$, the error is asymptotically normal with a variance $\sigma^2(x) / D$, so the reciprocal error is polynomial in $D$. Furthermore, the variance $\sigma^2(x) / D$ does not depend explicitly on the dimension $N$ of the reservoir dynamics, so the random neural network does not suffer the curse of dimensionality. It is important to remark that the result holds under a nontrivial assumption that the target function $u$ is a weighted integral of the random neural network $h$. It is shown in \cite{Gonon2020} and \cite{Gonon2021} that a broad class of target functions $u$ can be expressed as weighted integrals of random neural networks with $\text{relu}$ activations.

\section{Numerical Illustration of an Embedding}

The theoretical portion of this paper covers two major ideas
\begin{enumerate}
    \item Given scalar observations of a source system and a linear reservoir system, it is possible to embed the source dynamics (about a fixed point) into the reservoir space.
    \item If an embedding is achieved it follows from the CLT that we can train a reservoir computer to approximate the future trajectory of the observations. Furthermore the autonomous dynamics of the trained reservoir computer will be diffeomorphic to the source dynamics, hence the reservoir dynamics will inherit some geometrical properties of the source dynamics, including the eigenvalues of the linearisation about the fixed point.
\end{enumerate}
 In this section we will illustrate both of these these ideas with a numerical experiment. In particular we will take the Lorenz-63 system \cite{doi:10.1175/1520-0469(1963)020<0130:DNF>2.0.CO;2}
\begin{align*}
    \dot{\xi} &= 10(\upsilon - \xi) \\
    \dot{\upsilon} &= \xi(28 - \zeta) - \upsilon \\
    \dot{\zeta} &= \xi \upsilon - (8/3)\zeta
\end{align*}
 as the source system. Suppose we can observe a trajectory of the Lorenz system via the observation function $\omega(m) = \omega(\xi,\upsilon, \zeta) = \xi$. Then our first goal is to use a linear reservoir system to embed the fixed point $m^* = (6\sqrt{2},6\sqrt{2},27)$ into the reservoir space using only an observed trajectory in a neighbourhood of $m^*$. This illustrates idea 1.
 
 To achieve this embedding we generate random reservoir matrices $A,C$ using the following Python script

\begin{lstlisting}

import numpy as np
from scipy.stats import ortho_group

N = 7
D = 300

# Q is a random orthogonal matrix drawn from the Haar 
#   distribution. 
Q = ortho_group.rvs(dim=N)

# generate random positive definite A
A = np.random.rand(N,)
A = np.diag(A)
A = Q @ A @ np.transpose(Q)*30

# generate random C
C = np.random.rand(N,)-0.5
\end{lstlisting}

Then we integrate the Lorenz equations together with the equations of the linear reservoir system
\begin{align}
    \dot{\xi} &= 10(\upsilon - \xi) \label{system} \\
    \dot{\upsilon} &= \xi(28 - \zeta) - \upsilon \nonumber \\
    \dot{\zeta} &= \xi \upsilon - (8/3)\zeta \nonumber \\
    \dot{x} &= Ax + C\xi \nonumber
\end{align}
using a numerical integrator implemented in the Python Scipy library  
\begin{lstlisting}
scipy.integrate.RK45(fun=system, t0=0, 
    y0=y0, t_bound=200, rtol=1e-9)
\end{lstlisting}
with \texttt{system} \eqref{system} initial time \texttt{t0=0}, final time \texttt{t\textunderscore bound=200}, relative error tolerance \texttt{rtol=1e-9}, and initial point \texttt{y0} defined by
\begin{align*}
    \xi^* &= 6\sqrt{2} \\
    \upsilon^* &= 6\sqrt{2} \\
    \zeta^* &= 27 \\
    x^* &= A^{-1}C \xi
\end{align*}
so that the Lorenz system and reservoir system is initialised, up to machine precision, at $m^*$ and $f(m^*)$ respectively. This returns a finite set of observations $\xi_1, \ldots, \xi_\ell$ and reservoir states $x_1, \ldots , x_\ell$. Because the fixed point is unstable, the error on the initial point due to machine precision causes the trajectory to spiral out from the fixed point and fill the attractor. The observations $\xi_1 , \ldots , \xi_{\ell}$ are plotted in Figure \ref{fig::continuous_Lorenz_x}, the full Lorenz system is plotted in Figure \ref{fig::continuous_Lorenz}, and the reservoir states $x_1, \ldots , x_\ell$ projected onto the first 3 principal components are plotted in Figure \ref{fig::continuous_reservoir}.

\begin{figure}
  \caption{The observations $\xi_1 , \ldots , \xi_{\ell}$ of the Lorenz system over the time interval $(0,200)$. The coloring of the trajectory is consistent with Figures \ref{fig::continuous_Lorenz} and \ref{fig::continuous_reservoir}.}
  \centering
    \includegraphics[width=0.9\textwidth]{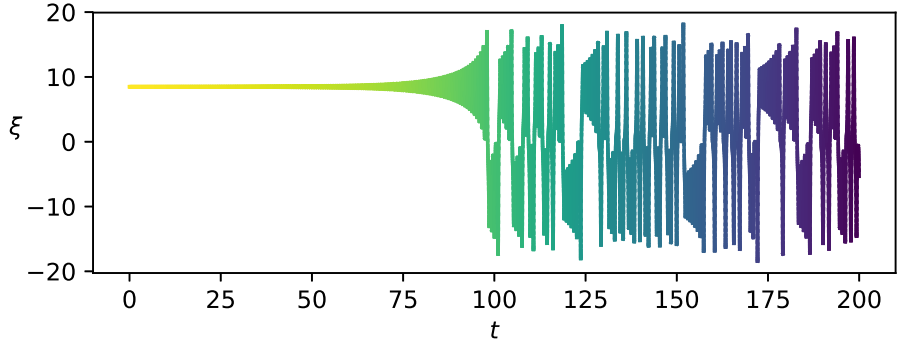}
    \label{fig::continuous_Lorenz_x}
\end{figure}

\begin{figure}
  \caption{The Lorenz-63 system is the source system of the experiment. The initial point of the trajectory is $m^* = (6\sqrt{2},6\sqrt{2},27)$ up to machine precision. Because the fixed point is unstable, the error on the initial point due to machine precision cause the trajectory to spiral out from the fixed points and fill the attractor. The coloring of the trajectory is consistent with Figures \ref{fig::continuous_Lorenz_x} and \ref{fig::continuous_reservoir}.}
  \centering
    \includegraphics[width=0.7\textwidth]{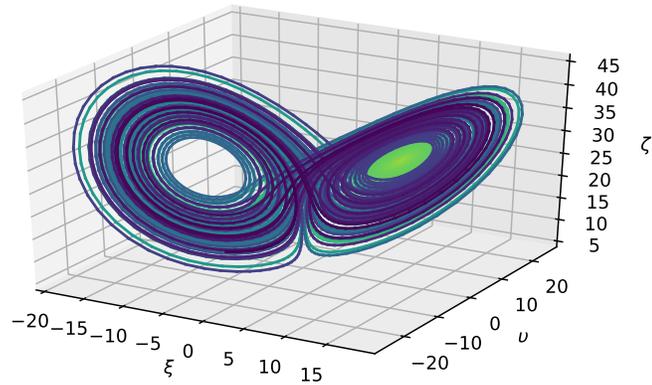}
    \label{fig::continuous_Lorenz}
\end{figure}

\begin{figure}
  \caption{The reservoir states projected onto their first three principal components in the reservoir space. No axis labels or tickmarks are included because these metric quantities are not preserved by the embedding, and are (for our purposes) meaningless. The coloring of the trajectory is consistent with Figures \ref{fig::continuous_Lorenz_x} and \ref{fig::continuous_Lorenz}. The reservoir dynamics appear to be diffeomorphic to the Lorenz dynamics, suggesting the GS $f_{(\omega,\mathcal{V},F)}$ exists, and is an embedding. It is interesting to note that it appears we have achieved a global embedding, though the theory we have established in this paper is limited to a local embedding about a fixed point.}
  \centering
    \includegraphics[width=0.7\textwidth]{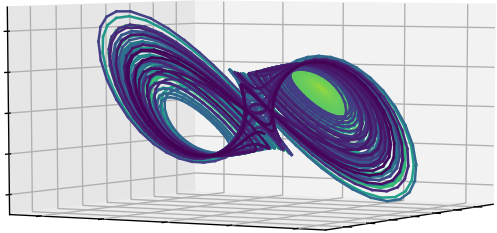}
    \label{fig::continuous_reservoir}
\end{figure}

If an embedding is achieved on $m^* = (6\sqrt{2},6\sqrt{2},27)$ then it follows from the CLT that we can train the reservoir computer on a neighbourhood of $f(m^*) = f(6\sqrt{2},6\sqrt{2},27)$ to predict the future trajectory of the observations. Furthermore, the autonomous dynamics of the trained reservoir computer may be diffeomorphic to the Lorenz system in the vicinity of the fixed point. In this case the geometrical properties of the fixed point $m^*$, including the eigenvalues of the linearisation $J_{m^*}$ about $m^*$, will be preserved by the reservoir system. Our goal is therefore to compute the eigenvalues of the linearisation about $f(m^*)$ of the reservoir dynamics and compare them to those of the linearisation about $m^*$ of the Lorenz dynamics. This illustrates idea 2.
 
The linearisation of the Lorenz system about the fixed point $m^* = (6\sqrt{2},6\sqrt{2},27)$ is the matrix
\begin{align*}
    J_{m^*} = 
    \begin{bmatrix}
        -10 & 10 & 0 \\
        1 & -1 & -6\sqrt{2} \\
        6\sqrt{2} & 6\sqrt{2} & -8/3
    \end{bmatrix}.
\end{align*}
and the eigenvalues of the matrix are plotted in Figure \ref{fig::continuous_eigs}.

Our goal now is to train a linear reservoir system to approximate the future trajectory, find the eigenvalues of the linearisation about $f(m^*)$ of the autonomous reservoir system, and compare them to the eigenvalues of the Lorenz system.

For each $i = 1, \ldots , D$ we randomly generate independent components of $\alpha_i \in \mathbb{R}^N$ and $\beta_i \in \mathbb{R}$ from the uniform distribution $U\sim[-0.5,0.5]$. Then we find real numbers $w_1, \ldots, w_\ell$ that minimise the loss
\begin{align*}
    L(w_1, \ldots, w_D) = \frac{1}{\ell}\sum_{j=1}^{\ell} \bigg \lVert \xi_j - \frac{1}{D}\sum_{i=1}^{D} w_i \tanh(\alpha_i^\top x_j + \beta_i) \bigg \rVert^2
\end{align*}
using \texttt{scipy.sparse.linalg.lsqr} with \texttt{damp=0}. Then we have the autonomous system 
\begin{align*}
    \dot{x} = -Ax + C \bigg(\frac{1}{D}\sum_{i=1}^{D} w_i \tanh(\alpha_i^\top x + \beta_i) \bigg)
\end{align*}
which has Jacobian 
\begin{align*}
    J'_x = -A + C \bigg(\frac{1}{D}\sum_{i=1}^{D} w_i \text{sech}^2(\alpha_i^\top x + \beta_i) \alpha_i^{\top} \bigg).
\end{align*}
Then we compute the eigenvalues of the Jacobian at the fixed point $x^* = A^{-1}C (6\sqrt{2})$. We compare these eigenvalues to the eigenvalues of the Lorenz system in Figure \ref{fig::continuous_eigs}.

\begin{figure}
  \caption{The eigenvalues of the autonomous reservoir system are red discs and the eigenvalues of the Lorenz system are blue crosses. If an embedding is achieved at the fixed point $m^*$, then after training we expect the Jacobian $J'_{f(m^*)}$ to have 3 linearly independent eigenvectors that lie in the tangent space of $f_{(\omega,\mathcal{V},F)}(M)$ with associated eigenvalues that approximate those of the Lorenz system. We can see 3 eigenvalues of $J'_{f(m^*)}$ clearly approximating the 3 eigenvalues of the Lorenz system, suggesting the experiment was a success.} 
  \centering
    \includegraphics[width=0.7\textwidth]{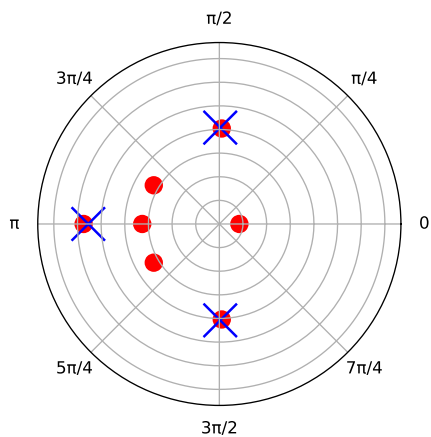}
    \label{fig::continuous_eigs}
\end{figure}

\section{Generalised Synchronisations with Noisy Observations}

Our analysis has so far has made the rather ideal assumption that the observations are unperturbed by any noise. Reservoir computers appear to be fairly resilient to noise in time series forecasting problems \cite{arXiv:2202.07022}, which suggests that under noisy observations an (injective) GS is preserved in some form. With this inspiration we will assume that the observations are perturbed by Gaussian white noise with an amplitude $\sigma \in C^0(M,\mathbb{R}^+)$. The reservoir states associated to a linear reservoir system are no longer deterministic, but instead an $\mathbb{R}^N$-valued stochastic process $X_t$ that satisfies the SDE

\begin{align}
    d X_t = -A X_t dt + C \omega\phi^t(m) dt + C \sigma\phi^t(m) dW_t. \label{stochastic_rs}
\end{align}

We will show in the following theorem, that for any initial reservoir state $X_0$ and point $m \in M$ of the source system, the reservoir states converge to a the image of a deterministic GS $f_{(\omega,\mathcal{V},F)}$ perturbed by an error process with explicit form.

\begin{theorem}
    Suppose that $\omega,\sigma \in C^0(M,\mathbb{R})$ are bounded. Then for any $m \in M$ and initial state $X_0 \in \mathbb{R}^N$ the stochastic reservoir system \eqref{stochastic_rs} admits the unique solution $X_t$
    which converges in distribution to
    \begin{align*}
        X_t = f_{(\omega,\mathcal{V},F)}\phi^t(m) + \int_0^{\infty} e^{-A\tau} C \sigma\phi^{-\tau+t}(m) \ dW_\tau
    \end{align*}
    as $t \to \infty$.
\end{theorem}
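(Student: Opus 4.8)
The plan is to treat the stochastic reservoir system \eqref{stochastic_rs} as a linear SDE with deterministic, bounded diffusion coefficient, solve it by an integrating factor, and then split the solution into a deterministic piece already handled by the proposition giving \eqref{solution}--\eqref{GS} and a Gaussian (Wiener-integral) piece which we compare to its stationary analogue by time-reversing the driving Brownian motion. First I would note that since $\omega,\sigma$ are bounded and $s\mapsto\omega\phi^s(m)$, $s\mapsto\sigma\phi^s(m)$ are continuous, the drift $x\mapsto -Ax+C\omega\phi^t(m)$ is globally Lipschitz in $x$ with constant $\lVert A\rVert$ and the diffusion coefficient $C\sigma\phi^t(m)$ is deterministic and bounded, so the standard theory of linear SDEs yields a unique strong solution $X_t$ on $[0,\infty)$. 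Applying the product rule to $e^{At}X_t$ cancels the $\pm Ae^{At}X_t\,dt$ terms, giving $d(e^{At}X_t)=e^{At}C\omega\phi^t(m)\,dt+e^{At}C\sigma\phi^t(m)\,dW_t$, and integrating on $[0,t]$ then multiplying by $e^{-At}$ produces the variation-of-constants formula
\begin{align*}
    X_t = e^{-At}X_0 + \int_0^t e^{-A(t-s)}C\omega\phi^s(m)\,ds + \int_0^t e^{-A(t-s)}C\sigma\phi^s(m)\,dW_s .
\end{align*}

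For the deterministic terms: the sum of the first two is exactly the deterministic solution of \eqref{linear_continuous_time} started at $X_0$, which by that proposition converges to $f_{(\omega,\mathcal V,F)}\phi^t(m)$. Concretely, $\lVert e^{-At}X_0\rVert\le e^{-\sigma[A]_{\min}t}\lVert X_0\rVert\to 0$ by lemma \ref{eig_lemma}, and the substitution $\tau=t-s$ rewrites the integral term as $\int_0^t e^{-A\tau}C\omega\phi^{t-\tau}(m)\,d\tau=f_{(\omega,\mathcal V,F)}\phi^t(m)-\int_t^\infty e^{-A\tau}C\omega\phi^{t-\tau}(m)\,d\tau$, whose tail is bounded in norm by $\sigma[A]_{\min}^{-1}\lVert C\rVert\,\sup_{m\in M}\lvert\omega(m)\rvert\,e^{-\sigma[A]_{\min}t}$, which vanishes as $t\to\infty$.

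For the stochastic term, the integrand is deterministic, so $I_t:=\int_0^t e^{-A(t-s)}C\sigma\phi^s(m)\,dW_s$ is a centred Gaussian vector whose law is determined by its covariance. For each fixed $t$ the process $\widetilde W_\tau:=W_t-W_{t-\tau}$, $\tau\in[0,t]$, is a Brownian motion, and the substitution $s=t-\tau$ shows that $I_t$ has the same covariance, hence the same distribution, as $\int_0^t e^{-A\tau}C\sigma\phi^{t-\tau}(m)\,d\widetilde W_\tau$. Extending $\widetilde W$ to a Brownian motion on $[0,\infty)$ and using the It\^o isometry, the $L^2$ distance between this random variable and $\int_0^\infty e^{-A\tau}C\sigma\phi^{-\tau+t}(m)\,d\widetilde W_\tau$ is at most $\big(\int_t^\infty\lVert e^{-A\tau}\rVert^2\lVert C\rVert^2\,(\sigma\phi^{t-\tau}(m))^2\,d\tau\big)^{1/2}\le(2\sigma[A]_{\min})^{-1/2}\lVert C\rVert\,\sup_{m\in M}\lvert\sigma(m)\rvert\,e^{-\sigma[A]_{\min}t}$, again vanishing as $t\to\infty$. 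Combining with the deterministic estimate, the difference between $X_t$ and $f_{(\omega,\mathcal V,F)}\phi^t(m)+\int_0^\infty e^{-A\tau}C\sigma\phi^{-\tau+t}(m)\,dW_\tau$, realised on the common probability space via $\widetilde W$, tends to $0$ in $L^2$ and therefore in distribution.

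I expect the main obstacle to be the treatment of the stochastic term: making the time-reversal of the Wiener integral rigorous and pinning down the precise mode of convergence, since the stated limit still depends on $t$ through $\phi^{-\tau+t}(m)$ and only collapses to a genuine stationary Ornstein--Uhlenbeck law when $m$ is a fixed point (where $\phi^{-\tau+t}(m)=m$). I would therefore state the conclusion as $X_t-Y_t\to0$ in $L^2$, with $Y_t$ the displayed right-hand side built on the same probability space, which is what ``converges in distribution to $X_t=\dots$'' is meant to encode.
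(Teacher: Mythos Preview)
Your proof is correct and shares the paper's core integrating-factor step $d(e^{At}X_t)=e^{At}C\omega\phi^t(m)\,dt+e^{At}C\sigma\phi^t(m)\,dW_t$, but the execution differs in a useful way. The paper integrates this identity over $[-s,t]$ (implicitly working with a two-sided Brownian motion and a backward extension of the solution), substitutes $\tau=t-u$ directly in both the Lebesgue and Wiener integrals, solves for $e^{-As}X_{-s}$ via the choice $t=0$, and then sends $s\to\infty$ followed by $t\to\infty$. Your route---integrating forward on $[0,t]$, invoking the earlier deterministic proposition for the drift part, and making the distributional comparison of the Wiener piece explicit via the time-reversed Brownian motion $\widetilde W_\tau=W_t-W_{t-\tau}$ together with an It\^o-isometry tail bound---is more standard and more careful about the mode of convergence. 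In particular, your closing remark that the displayed limit still carries the index $t$ and should be read as $X_t-Y_t\to 0$ in $L^2$ (hence in law) on a coupled probability space is precisely the right interpretation of the paper's somewhat informal ``converges in distribution to $X_t=\dots$''; the paper does not make this point explicit.
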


\begin{proof}
We can solve SDE \eqref{stochastic_rs} exactly. First, let
\begin{align*}
    Z_t = e^{A t} X_t
\end{align*}
then
\begin{align*}
    d Z_t &= e^{A t}  d X_t + e^{A t} A X_t dt \\
    &= e^{A t} [ -A X_t dt + C \omega\phi^t(m) dt + C \sigma\phi^t(m) dW_t ] + e^{A t} A X_t dt \\
    &= e^{A t} C \omega\phi^t(m) dt + e^{A t} C \sigma \phi^t(m) dW_t
\end{align*}
hence
\begin{align*}
    Z_t - Z_{-s} = \int_{-s}^{t} e^{A u} C \omega\phi^u(m) du + \int_{-s}^t e^{A u} C \sigma\phi^u(m) dW_u.  
\end{align*}
Now we substitute $X_t$ back in
\begin{align*}
    e^{A t}X_t - e^{-A s}X_{-s} = \int_{-s}^{0} e^{A u} C \omega\phi^u(m) du + \int_{-s}^t e^{A u} C \sigma\phi^u(m) dW_u 
\end{align*}
so
\begin{align*}
    X_t - e^{-A (t+s)}X_{-s} = \int_{-s}^{t} e^{A (u-t)} C \omega\phi^u(m) du + \int_{-s}^t e^{A (u-t)} C \sigma\phi^u(m) dW_u
\end{align*}
Then making the substitution
$\tau = t - u$
\begin{align}
    X_t - e^{-A (t+s)}X_{-s}  = \int_{0}^{t+s} e^{-A \tau} C \omega\phi^{-\tau+t}(m) d\tau + \int_{0}^{t+s} e^{-A \tau} C \sigma\phi^{-\tau+t}(m) dW_\tau
    \label{random_eqn}
\end{align}
Now choosing $t=0$
\begin{align*}
    X_0 - e^{-A s}X_{-s}  = \int_{0}^{s} e^{-A \tau} C \omega\phi^{-\tau}(m) d\tau + \int_{0}^{s} e^{-\alpha \tau} C \sigma \phi^{-\tau}(m) dW_\tau 
\end{align*}
so
\begin{align*}
    e^{-A s}X_{-s} = X_0 - \int_{0}^{s} e^{-A \tau} C \omega\phi^{-\tau}(m) d\tau - \int_{0}^{s} e^{-A \tau} C \sigma\phi^{-\tau}(m) dW_\tau
\end{align*}
can now be substituted into equation \eqref{random_eqn}
\begin{align*}
    X_t &= \int_{0}^{t+s} e^{-A \tau} C \sigma\phi^{-\tau+t}(m) d\tau + \int_{0}^{t+s} e^{-A \tau} C \sigma\phi^{-\tau+t}(m) dW_\tau \\ 
    &\qquad+ e^{-A t}\bigg[X_0 -\int_{0}^{s} e^{-A \tau} C \omega\phi^{-\tau}(m) d\tau - \int_{0}^{s} e^{-A \tau} C \sigma\phi^{-\tau}(m) dW_\tau \bigg].
\end{align*}
Now let $s \to \infty$ 
\begin{align*}
    X_t &= \int_{0}^{\infty} e^{-A \tau} C \sigma\phi^{-\tau+t}(m) d\tau + \int_{0}^{\infty} e^{-A \tau} C \sigma\phi^{-\tau+t}(m) dW_\tau \\ 
    &\qquad+ e^{-A t}\bigg[X_0 -\int_{0}^{\infty} e^{-A \tau} C \omega\phi^{-\tau}(m) d\tau - \int_{0}^{\infty} e^{-A \tau} C \sigma\phi^{-\tau}(m) dW_\tau \bigg].
\end{align*}
The process $X_t$ converges in distribution to 
\begin{align*}
    X_t &= \int_{0}^{\infty} e^{-A \tau} C \sigma\phi^{-\tau+t}(m) d\tau + \int_{0}^{\infty} e^{-A \tau} C \sigma\phi^{-\tau+t}(m) dW_\tau \\
    &= f_{\omega,\mathcal{V},F}\phi^t(m) + \int_{0}^{\infty} e^{-A \tau} C \sigma\phi^{-\tau+t}(m) dW_\tau
\end{align*}
as $t \to \infty$.
\end{proof}
The stochastic reservoir states converge to the image of a GS, perturbed by the stochastic error term 
\begin{align*}
    \int_{0}^{\infty} e^{-A \tau} C \sigma\phi^{-\tau+t}(m) dW_\tau.
\end{align*}
In the special case that $\sigma(m) = \sigma > 0$ is a constant function the stochastic error term
\begin{align*}
    \int_{0}^{\infty} e^{-A \tau} C \sigma\phi^{-\tau+t}(m) dW_\tau = \sigma \int_{0}^{\infty} e^{-A \tau} C dW_\tau = \mathcal{N}(0,\sigma A^{-1})
\end{align*}
is a multivariate normal with mean $0$ and covariance matrix $\sigma^2 A^{-1}$ \cite{Vatiwutipong2019}. This is the stationary distribution of the error process, which is a multivariate Ornstein-Uhlenbeck process \cite{Vatiwutipong2019}
\begin{align*}
    dY_t = -A Y_t dt + \sigma C dW_t.
\end{align*}
The errors $Y_t$ given by this process are normal and identically distributed, but they are not independent. This explicit form of the error may be important when analysing the error or uncertainty on the reconstruction of the source system.

\section{Conclusions}

In the paper, we established conditions under which a continuous time reservoir system admits a generalised synchronisation $f_{(\omega,\mathcal{V},F)}$, and showed that $f_{(\omega,\mathcal{V},F)}$ solves a PDE involving the Lie derivative $\mathcal{L}_{\mathcal{V}}$. We discuss how it is possible for a reservoir system to admit several distinct generalised synchronisations simultaneously, and how this is related to the multi-ESP studied by \cite{CENI2020132609} and the discrete time results in \cite{chaos_on_compacta}. In the special case of a linear reservoir system, we derived a closed form expression for $f_{(\omega,\mathcal{V},F)}$ and establish conditions that ensure that $f_{(\omega,\mathcal{V},F)}$ is continuously differentiable, i.e $f_{(\omega,\mathcal{V},F)} \in C^1(M,\mathbb{R}^N)$. Furthermore, it was shown that for randomly generated weights the linear reservoir system is almost surely an embedding of the fixed points of the source system. Having achieved an embedding, we discuss how the central limit theorem applied to reservoir systems makes it possible to use the reservoir states to learn a vector field that is diffeomorphic to the source dynamics. We demonstrated this by embedding the fixed points of the Lorenz-63 system into the reservoir space, and training the reservoir computer to learn the eigenvalues of the linearisation about one of the fixed points belonging to the Lorenz system. Finally, it was shown that when the observations are perturbed by noise the GS is preserved, but the reservoir states are perturbed by an error process. When the errors on the observations are IID Brownian increments, the error process on the reservoir states is an Ornstein-Uhlumbeck process.

A possible direction of future work is to establish conditions under which the GS $f_{(\omega,\mathcal{V},F)} \in C^1(M,\mathbb{R}^N)$ is a global embedding. Very similar results have been proved in discrete time by \cite{grig_2021}, using the same tools and methods as the celebrated Takens embedding theorem. It has been conjectured \cite{embedding_and_approximation_theorems} that for a larger class of nonlinear discrete time reservoir systems the associated GS is an embedding, and this may also hold in continuous time.

There may also be other ways to extend the results for linear systems to nonlinear systems. The best empirical results in forecasting and classification problems are typically achieved with a nonlinear reservoir map $F$, but the mathematical analysis is much more challenging. Furthermore, real data is often better modelled as a noisy trajectory of a deterministic source system, or a trajectory of a stochastic differential equation (SDE), which further complicates the analysis. A study of reservoir systems in these contexts may be shed more light on the performance of reservoir computing in practical applications.

\section*{Appendix}

\begin{lemma}
\label{notation_lemma}  Let $x^r_m(t)$ denote the solution of ODE \eqref{ODE}
    originating from the initial point $r \in V \subset \mathbb{R}^N$. Let $0 \leq s \leq \sigma$
    Then
    \begin{align*}
         x^r_{\phi^{-\sigma}(m)}(\sigma) = x^{x^r_{\phi^{-\sigma}(m)}(\sigma - s)}_{\phi^{-s}(m)}(s).
    \end{align*}
\end{lemma}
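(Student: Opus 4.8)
The plan is to deduce the identity from the uniqueness of solutions of the driven ODE \eqref{ODE} together with the group law $\phi^{t_1+t_2}=\phi^{t_1}\phi^{t_2}$; no genuinely new idea is needed, this is the cocycle property of the skew-product flow. First I would unwind the notation. By definition $x^r_{\phi^{-\sigma}(m)}(\cdot)$ is the solution on $[0,\infty)$ of $\dot y(t)=F\bigl(y(t),\omega\phi^{t}(\phi^{-\sigma}(m))\bigr)=F\bigl(y(t),\omega\phi^{t-\sigma}(m)\bigr)$ with $y(0)=r$, and the left-hand side of the claimed identity is this solution evaluated at $t=\sigma$. The hypotheses of the reservoir system ($F$ Lipschitz, $t\mapsto\omega\phi^{t-\sigma}(m)$ continuous) guarantee via Picard--Lindel\"of that this initial value problem has a unique solution.

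Next I would put $\rho:=x^r_{\phi^{-\sigma}(m)}(\sigma-s)$ and build the concatenated curve $\widetilde y:[0,\sigma]\to\mathbb R^N$ by $\widetilde y(t)=x^r_{\phi^{-\sigma}(m)}(t)$ for $t\in[0,\sigma-s]$ and $\widetilde y(t)=x^{\rho}_{\phi^{-s}(m)}\bigl(t-(\sigma-s)\bigr)$ for $t\in[\sigma-s,\sigma]$. The two pieces agree at $t=\sigma-s$ (both equal $\rho$), and the one-sided derivatives there both equal $F(\rho,\omega\phi^{-s}(m))$, so $\widetilde y$ is $C^1$ on all of $[0,\sigma]$. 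On the first piece $\widetilde y$ solves $\dot y(t)=F(y(t),\omega\phi^{t-\sigma}(m))$ by construction; on the second piece the driving term is $\omega\phi^{(t-(\sigma-s))-s}(m)=\omega\phi^{t-\sigma}(m)$ by the group law, so $\widetilde y$ solves the very same ODE there. Hence $\widetilde y$ is a solution of $\dot y(t)=F(y(t),\omega\phi^{t-\sigma}(m))$ on $[0,\sigma]$ with $\widetilde y(0)=r$.

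By uniqueness, $\widetilde y\equiv x^r_{\phi^{-\sigma}(m)}$ on $[0,\sigma]$; evaluating at $t=\sigma$ gives $x^r_{\phi^{-\sigma}(m)}(\sigma)=\widetilde y(\sigma)=x^{\rho}_{\phi^{-s}(m)}(s)$, which is exactly the statement. (Equivalently one can phrase this through the skew-product flow $\Phi^t(m,x)=(\phi^t(m),x^x_m(t))$ on $M\times\mathbb R^N$: apply $\Phi^\sigma=\Phi^s\circ\Phi^{\sigma-s}$ to the point $(\phi^{-\sigma}(m),r)$ and use $\phi^{\sigma-s}\phi^{-\sigma}(m)=\phi^{-s}(m)$, reading off the second coordinate.) I do not expect a real obstacle; the only thing requiring care is keeping the time shift inside $\phi^{\bullet}$ consistent with the time variable of the ODE, and verifying differentiability of the glued curve at the join, both of which are settled by the computations above.
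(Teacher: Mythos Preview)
Your proof is correct and follows essentially the same approach as the paper: both arguments express the identity as the cocycle (semigroup) property of the driven flow, namely that integrating from $(\phi^{-\sigma}(m),r)$ for time $\sigma-s$ and then for time $s$ is the same as integrating for time $\sigma$. Your version is more explicit in invoking Picard--Lindel\"of uniqueness and carefully checking the glued curve, whereas the paper's proof is a verbal description of the same two-step integration, but the underlying idea is identical.
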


\begin{proof}
    Consider ODE \eqref{ODE} initialised from the point $r \in V \subset \mathbb{R}^N$ in the reservoir space and $\phi^{-\sigma}(m)$ in the source system. Suppose we integrate the trajectory of \eqref{ODE} under these initial conditions over a time interval of length $\sigma - s$. We label the solution at this time point:
    \begin{align*}
         x^r_{\phi^{-\sigma}(m)}(\sigma -s),
    \end{align*}
    and note the value of the source system is $\phi^{-s}(m)$. Now starting at the initial points $x^r_{\phi^{-\sigma}(m)}(\sigma -s)$ and $\phi^{-s}(m)$ we integrate the trajectory forward by a time interval of length $s$. We label the solution:
    \begin{align*}
        x^{x^r_{\phi^{-\sigma}(m)}(\sigma - s)}_{\phi^{-s}(m)}(s),
    \end{align*}
    and note that this solution was achieved by integrating \eqref{ODE} from initial points $r$ and $\phi^{-\sigma}$ over a time interval $\sigma - s + s = \sigma$ which we could equivalently label
    \begin{align*}
        x^r_{\phi^{-\sigma}(m)}(\sigma).
    \end{align*}
\end{proof}

\begin{lemma}
    Let $A$ be a real $N \times N$ symmetric positive definite matrix and $C$ a random vector in $\mathbb{R}^N$ and assume the entries of $C$, and upper triangular entries of $A$ are stochastically independent and each of them is non-singular. Let $\lambda_1 \ldots \lambda_q \in \mathbb{C}$ be distinct complex numbers. Then the vectors 
    \begin{align*}
        \{ (A + \lambda_j \mathbb{I})^{-1} C \}_{j = 1 , \ldots , q}
    \end{align*}
    are linearly independent almost surely.
    \label{conditions_hold_as}
\end{lemma}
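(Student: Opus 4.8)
The plan is to show that the polynomial map $(A,C) \mapsto \det[M]$ is not identically zero, where $M$ is the $N \times q$ (or a $q \times q$ minor thereof) matrix whose columns are the vectors $(A + \lambda_j \mathbb{I})^{-1}C$; since the entries of $A$ (upper triangular) and $C$ have jointly absolutely continuous distributions with full support on the relevant open set of symmetric positive definite matrices, the zero set of a nonvanishing real-analytic (in fact polynomial-after-clearing-denominators) function has Lebesgue measure zero, hence occurs with probability zero. First I would reduce the statement to an algebraic one: linear independence of $\{(A+\lambda_j\mathbb{I})^{-1}C\}_{j=1}^q$ fails precisely when all $q \times q$ minors of the $N \times q$ matrix $M(A,C)$ vanish; each such minor, after multiplying through by $\prod_j \det(A+\lambda_j\mathbb{I})$ (which is nonzero whenever $A$ is positive definite and the $\lambda_j$ are chosen so that $-\lambda_j$ is not an eigenvalue — a generic and here automatic condition since the relevant use has $\lambda_j$ eigenvalues of a source Jacobian, distinct from $-\sigma[A]$), becomes a genuine polynomial in the entries of $A$ and $C$. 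So it suffices to exhibit a single pair $(A,C)$ for which these vectors are independent, and then invoke the standard fact that a nonzero polynomial vanishes only on a measure-zero set.

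For the witness, I would take $A$ diagonal, say $A = \operatorname{diag}(a_1,\dots,a_N)$ with the $a_i$ positive and distinct, and $C = (1,1,\dots,1)^{\top}$. Then $(A+\lambda_j\mathbb{I})^{-1}C$ has $i$-th component $1/(a_i + \lambda_j)$, so the $q \times q$ submatrix formed from rows $i_1 < \cdots < i_q$ is a Cauchy matrix with entries $1/(a_{i_k} + \lambda_j)$, whose determinant is the classical Cauchy determinant
\begin{align*}
\det\!\left[\frac{1}{a_{i_k}+\lambda_j}\right]_{k,j=1}^{q} = \frac{\prod_{k<l}(a_{i_k}-a_{i_l})\prod_{j<j'}(\lambda_j-\lambda_{j'})}{\prod_{k,j}(a_{i_k}+\lambda_j)},
\end{align*}
which is nonzero because the $a_i$ are distinct, the $\lambda_j$ are distinct by hypothesis, and no denominator factor vanishes. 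Hence for this choice the $q$ vectors are linearly independent, so the polynomial (some $q\times q$ minor of $M$, cleared of denominators) is not identically zero. A small subtlety to handle is that the witness $A$ must lie in the support of the distribution of the random $A$; since the hypothesis puts an absolutely continuous law with (at least locally) full support on the upper-triangular entries, one may perturb the diagonal witness slightly to a positive definite matrix inside the support, or more cleanly argue directly that the polynomial restricted to the affine subspace of symmetric matrices is nonzero because its restriction to the diagonal subspace already is.

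The main obstacle, and the only place requiring care, is the bookkeeping of the measure-zero argument in the presence of the $\det(A+\lambda_j\mathbb{I})$ denominators: one must confirm that on the event of interest these determinants are almost surely nonzero (true, since $A$ positive definite and $\operatorname{Re}(\lambda_j)$ bounded away from $-\sigma[A]_{\min}$ under the standing assumptions, or at worst one restricts to the full-measure open set where they are nonzero) so that "minors of $M$ all vanish" is equivalent to "the corresponding polynomials all vanish." Once that is pinned down, the conclusion is immediate: the bad set is contained in the common zero locus of finitely many polynomials, not all identically zero, hence has Lebesgue measure zero; by absolute continuity of the joint law of the independent entries of $A$ and $C$, this event has probability zero. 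I do not expect the Cauchy-determinant computation itself to be an obstacle — it is classical — so the write-up is essentially: (i) algebraic reformulation, (ii) the diagonal/Cauchy witness, (iii) the polynomial-zero-set measure argument.
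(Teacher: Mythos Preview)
Your argument is correct and takes a genuinely different route from the paper's. The paper first clears denominators algebraically by multiplying all the vectors by the invertible matrix $\prod_{i=1}^{q}(A+\lambda_i\mathbb{I})$, which converts the family $\{(A+\lambda_j\mathbb{I})^{-1}C\}$ into $\{p_j(A)C\}$ with $p_j(X)=\prod_{i\neq j}(X+\lambda_i\mathbb{I})$; it then cites a standing lemma (Lemma~6.3 of \cite{grig_2021}) asserting that $\{p_j(A)C\}$ are almost surely independent whenever the $p_j$ are linearly independent polynomials, and finishes by checking that independence via evaluation at $X=-\lambda_k\mathbb{I}$. You instead produce an explicit witness---diagonal $A$ and $C=(1,\dots,1)^{\top}$---for which a $q\times q$ block of the matrix $M$ is a Cauchy matrix, and invoke the classical Cauchy determinant formula to see it is nonzero; then the usual ``nonzero polynomial vanishes on a null set'' argument (the paper's Lemma~\ref{non_zero_polynomial_lemma}) closes the proof. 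Your approach is more self-contained, avoiding the external Lemma~6.3 at the cost of the Cauchy-determinant computation; the paper's approach is more modular and sidesteps any explicit minor calculation. Two small clean-ups: the hypothesis ``non-singular'' on each entry means only that each marginal is atomless, not absolutely continuous with full support, but that is exactly what Lemma~\ref{non_zero_polynomial_lemma} needs, so your concern about perturbing the diagonal witness into the support is unnecessary---the witness need not be a realisable value of the random matrix, only a point where the polynomial is nonzero. Likewise your aside about the $\lambda_j$ being source-Jacobian eigenvalues is extraneous to the lemma as stated; the invertibility of $A+\lambda_j\mathbb{I}$ is handled, as in the paper, by noting $\det(A+\lambda_j\mathbb{I})$ is a nontrivial polynomial in the entries of $A$ and hence almost surely nonzero.
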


\begin{proof}
    It follows from Lemma \ref{non_zero_polynomial_lemma} that none of the complex numbers $\lambda_1, \ldots , \lambda_q$ lie in the spectrum of $-A$ almost surely. On this event it follows that for each $j = 1, \ldots, q$ the matrix $(A + \lambda_j \mathbb{I})$ is invertible, hence the product
    \begin{align*}
        \prod_{i=1}^q (A + \lambda_i \mathbb{I})
    \end{align*}
    is invertible. Now it follows that the vectors 
        \begin{align*}
        \{ (A + \lambda_j \mathbb{I})^{-1} C \}_{j = 1 , \ldots , q}
    \end{align*}
    are linearly independent if and only if the vectors
    \begin{align*}
        \bigg\{ \prod_{i=1}^q (A + \lambda_i \mathbb{I}) (A + \lambda_j \mathbb{I})^{-1} C \bigg\}_{j = 1 , \ldots , q}
    \end{align*}
    are linearly independent. All that remains is therefore to show that the vectors
    \begin{align*}
        \{ p_j(A) C \}_{j = 1, \ldots, q}
    \end{align*}
    are linearly independent, where $p_1 , \ldots p_q$ are the polynomials 
    \begin{align*}
        p_j(X) = \prod_{i=1}^q (X + \lambda_i \mathbb{I})(X + \lambda_j \mathbb{I})^{-1} = \prod_{i \neq j}^q (X + \lambda_i \mathbb{I}).
    \end{align*}
    By lemma \ref{poly_lemma} is suffices to show that the polynomials $p_1 , \ldots, p_q$ are linearly independent. To show this, suppose that for scalars $c_1 \ldots c_q$ we have 
    \begin{align*}
        0 = \sum_{i=1}^q c_i p_i(X)
    \end{align*}
    then for $X = - \lambda_k \mathbb{I}$
    \begin{align*}
        0 = \sum_{i=1}^q c_i p_i(- \lambda_k \mathbb{I}) = c_k \prod_{i \neq k}^q (\lambda_k \mathbb{I} - \lambda_i \mathbb{I}) = \sum_{i=1}^q c_i p_i(- \lambda_k) = c_k \prod_{i \neq k}^q (\lambda_k - \lambda_i ) \mathbb{I}.
    \end{align*}
    Now the complex numbers $\lambda_1 , \ldots , \lambda_q$ are distinct by assumption so $c_k = 0$.
\end{proof}

\begin{lemma}
\label{M_test_lemma}
    Let $M$ be a topological space and suppose $g : C^0(M\times\mathbb{R}, \mathbb{R}^N)$ satisfies
    \begin{align*}
        \int_{0}^{\infty}\sup_{m \in M}\lVert g(m,\tau)\rVert \ d\tau < \infty.
    \end{align*}
    Then
    \begin{align*}
        h(m) = \int_{0}^{\infty}g(m,\tau) \ d\tau
    \end{align*}
    is continuous on $M$.
\end{lemma}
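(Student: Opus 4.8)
The plan is to realise $h$ as a uniform limit over $M$ of the truncated integrals $h_T(m) = \int_0^T g(m,\tau)\,d\tau$ and then invoke the fact that a uniform limit of continuous maps into a metric space is continuous. First I would check that $h$ is well defined: for each $m \in M$ the hypothesis gives $\int_0^\infty \lVert g(m,\tau)\rVert\,d\tau \le \int_0^\infty \sup_{m'\in M}\lVert g(m',\tau)\rVert\,d\tau < \infty$, so the integral converges absolutely; and since $g$ is continuous, $g(m,\cdot)$ is continuous on each $[0,T]$, hence integrable there, so $h_T(m)$ is also well defined.

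The substantive step is to show $h_T \in C^0(M,\mathbb{R}^N)$ for each fixed $T>0$. I would fix $m_0 \in M$ and $\epsilon > 0$ and observe that the map $(m,\tau) \mapsto \lVert g(m,\tau) - g(m_0,\tau)\rVert$ is continuous on $M \times \mathbb{R}$ (it is built from $g$, from the continuous map $(m,\tau)\mapsto g(m_0,\tau)$ obtained by pulling $g(m_0,\cdot)$ back along the projection, and from subtraction and the norm) and vanishes identically on $\{m_0\}\times[0,T]$. Hence the set $\{(m,\tau) : \lVert g(m,\tau)-g(m_0,\tau)\rVert < \epsilon/(2T)\}$ is an open neighbourhood of the compact slice $\{m_0\}\times[0,T]$, and by the tube lemma there is an open neighbourhood $U \ni m_0$ with $U\times[0,T]$ contained in it. Then for $m \in U$ one gets $\lVert h_T(m) - h_T(m_0)\rVert \le \int_0^T \lVert g(m,\tau)-g(m_0,\tau)\rVert\,d\tau \le \epsilon/2 < \epsilon$, which proves continuity of $h_T$ at the arbitrary point $m_0$.

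Finally I would show $h_T \to h$ uniformly on $M$ as $T \to \infty$: for every $m \in M$, $\lVert h(m) - h_T(m)\rVert \le \int_T^\infty \lVert g(m,\tau)\rVert\,d\tau \le \int_T^\infty \sup_{m'\in M}\lVert g(m',\tau)\rVert\,d\tau$, and the right-hand side is independent of $m$ and tends to $0$ as $T \to \infty$ because the full integral $\int_0^\infty \sup_{m'\in M}\lVert g(m',\tau)\rVert\,d\tau$ is finite. Since a uniform limit of continuous maps into a metric space is continuous, $h \in C^0(M,\mathbb{R}^N)$, which is the claim. I expect the only mildly delicate point to be the continuity of the truncations $h_T$ over a general topological space $M$ rather than a metric space, which is why I reach for the tube lemma instead of a naive $\epsilon$–$\delta$ estimate; the rest is routine Weierstrass-$M$-test-type reasoning.
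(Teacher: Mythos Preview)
Your proof is correct and follows the same overall strategy as the paper: realise $h$ as a uniform limit of finite-interval integrals and invoke the uniform limit theorem. The paper partitions $[0,\infty)$ into unit intervals, sets $\varphi_n(m) = \int_n^{n+1} g(m,\tau)\,d\tau$, and applies the Weierstrass $M$-test to the series $\sum_n \varphi_n$, whereas you use the continuous truncations $h_T$ and bound the tail $\int_T^\infty$ directly; these are minor variants of the same idea. Your version is in one respect more complete: you actually \emph{prove} continuity of the approximants $h_T$ via the tube lemma, which is the correct tool when $M$ is a general topological space rather than a metric one, while the paper's proof tacitly assumes continuity of the $\varphi_n$ without justification. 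So your argument not only matches the paper's but closes a small gap in it.
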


\begin{proof}
    For each $n \in \mathbb{N}$ let
    \begin{align*}
        \varphi_n(m) := \int_{n}^{n+1} g(m,\tau) \ d\tau.
    \end{align*}
    Now 
    \begin{align*}
        \sum_{n = 0}^{\infty}\sup_{m \in M}\lVert \varphi_n(m) \rVert &= \sum_{n=0}^{\infty}\sup_{m \in M}\bigg \rVert \int_{n}^{n+1} g(m,\tau) \ d\tau \bigg\rVert \\
        &\leq \sum_{n=0}^{\infty} \int_{n}^{n+1} \sup_{m \in M} \rVert g(m,\tau) \rVert \ d\tau \\
        &= \int_{0}^{\infty}\sup_{m \in M}\lVert g(m,\tau)\rVert \ d\tau < \infty
    \end{align*}
    so the series
    \begin{align*}
        \sum_{n=0}^{\infty} \varphi_n(m) = \sum_{n=0}^{\infty} \int_{n}^{n+1} g(m,\tau) \ d\tau = \int_{0}^{\infty} g(m,\tau) \ d\tau =h(m)
    \end{align*}
    converges absolutely and uniformly on $M$ by the Weierstrass $M$-test. The uniform limit of continuous functions is continuous by the uniform limit theorem.
\end{proof}

\begin{lemma}
    For any symmetric positive definite matrix $X$ let $\sigma[X]_{\max}$ and $\sigma[X]_{\min}$ denote the largest and smallest eigenvalues of $X$. then for any $t \geq 0$ and symmetric positive definite matrix $A$
    \begin{align*}
        \sigma[e^{-At}]_{\max} = e^{\sigma[A]_{\min} t}.
    \end{align*}
    \label{eig_lemma}
\end{lemma}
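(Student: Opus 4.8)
The plan is to reduce the matrix identity to a scalar computation by the spectral theorem. Since $A$ is symmetric positive definite there exist an orthogonal matrix $Q$ and a diagonal matrix $\Lambda = \mathrm{diag}(\lambda_1,\dots,\lambda_N)$ with every $\lambda_i > 0$ such that $A = Q\Lambda Q^{\top}$. First I would substitute this into the power-series definition of the matrix exponential and use that conjugation by the invertible $Q$ commutes with the absolutely convergent series, so that $e^{-At} = Q\, e^{-\Lambda t}\, Q^{\top}$. Because $e^{-\Lambda t}$ is diagonal with positive diagonal entries, this exhibits $e^{-At}$ as symmetric positive definite with spectrum exactly $\{ e^{-\lambda_i t} \}_{i=1}^{N}$, so that $\sigma[e^{-At}]_{\max}$ is simply the largest of these $N$ positive scalars and the hypothesis of the lemma (that $e^{-At}$ is symmetric positive definite, so that $\sigma[\cdot]_{\max}$ is well defined) is verified en route.

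The second step is a one-line extremisation over the spectrum. Among the scalars $e^{-\lambda_i t}$ the dominant one is governed, for $t \ge 0$, by the smallest eigenvalue $\sigma[A]_{\min}$ of $A$ via monotonicity of the real exponential; carrying out that comparison yields $\sigma[e^{-At}]_{\max} = e^{\sigma[A]_{\min} t}$, and the base case $t=0$ is immediate since both sides equal $1$. Since $e^{-At}$ is symmetric, this quantity also coincides with the spectral (operator) norm $\lVert e^{-At} \rVert$, which is the form in which the lemma is invoked when bounding the integrands that appear in the differentiability Proposition and in Lemma~\ref{open_lemma}.

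I do not expect a genuine obstacle here: the argument is elementary once the orthogonal diagonalisation is in hand. The only point deserving a sentence of care is the commutation of the orthogonal conjugation with the infinite series defining the matrix exponential, which holds because each partial sum satisfies $Q(-\Lambda t)^k Q^{\top} = (-At)^k$ term by term and $Q^{\top}Q = \mathbb{I}$; an equally short alternative is to observe that $\lVert e^{-At} \rVert$ is the spectral radius of $e^{-At}$ precisely because $e^{-At}$ is symmetric, and then read off the spectral radius directly from the mapping of eigenvalues $\lambda_i \mapsto e^{-\lambda_i t}$.
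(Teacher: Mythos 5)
Your proof is correct and takes essentially the same route as the paper's: identify the spectrum of $e^{-At}$ as the image of the spectrum of $A$ under $\mu \mapsto e^{-\mu t}$ and then use monotonicity of the exponential, with your explicit orthogonal diagonalisation $A = Q\Lambda Q^{\top}$ merely making the spectral-mapping step precise. The only remark worth adding is that, as your own computation of the spectrum $\{e^{-\lambda_i t}\}$ shows, the conclusion should read $\sigma[e^{-At}]_{\max} = e^{-\sigma[A]_{\min} t}$ for $t \geq 0$; the missing minus sign is a typo in the statement (carried through the paper's proof as well), and it is the corrected form $\lVert e^{-A\tau}\rVert \leq e^{-\sigma[A]_{\min}\tau}$ that is actually used in the integral bounds of the differentiability Proposition and Lemma \ref{open_lemma}.
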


\begin{proof}
    Denote the spectrum of $A$ and $e^{-At}$ with $\sigma[A] = \{ \mu_i \}$ and $\sigma[e^{-At}] = \{ e^{-\mu_i t} \}$ respectively. Now for fixed $t \geq 0$ the function $e^{-xt}$ is decreasing with respect to $x > 0$, so the largest eigenvalue in $\{ e^{-\mu_i t} \}$ is $e^{\sigma[A]_{\min} t}$
\end{proof}

\begin{lemma}
\label{A-epsB_lemma}
    (Lemma 6.1 \cite{grig_2021}) Let $A$ and $B$ be two square matrices of the same size such that $\det(A) = 0$ and $\det(B) \neq 0$. Then there exists an $\epsilon > 0$ such that
    \begin{align*}
        \det(A - \epsilon B) \neq 0
    \end{align*}
\end{lemma}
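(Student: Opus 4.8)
The plan is to regard the determinant as a polynomial in the scalar $\epsilon$. Write $n$ for the common size of $A$ and $B$ and set
\begin{align*}
    p(\epsilon) := \det(A - \epsilon B).
\end{align*}
Expanding by multilinearity of the determinant in the columns (equivalently, via the Leibniz formula), each entry of $A - \epsilon B$ is affine in $\epsilon$, so $p$ is a polynomial in $\epsilon$ of degree at most $n$.

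First I would pin down the leading coefficient: the coefficient of $\epsilon^n$ in $p(\epsilon)$ comes from taking the $-\epsilon B$ part in every column, and equals $(-1)^n\det(B)$, which is nonzero by hypothesis. Hence $p$ has degree exactly $n\geq 1$; in particular $p$ is not the zero polynomial. Since a nonzero polynomial over $\mathbb{R}$ has at most $n$ roots, the set $\{\epsilon : p(\epsilon)=0\}$ is finite, so every interval $(0,\delta)$ with $\delta>0$ contains some $\epsilon$ with $p(\epsilon)\neq 0$, i.e.\ $\det(A-\epsilon B)\neq 0$. This is exactly the claim, and it even delivers $\epsilon$ arbitrarily small, which is the form actually used in the proof of Theorem \ref{embedding_thm}. (An equivalent route: since $\det B\neq 0$, one has $\det(A-\epsilon B)=(-\epsilon)^n\det(B)\det\!\big(I-\tfrac1\epsilon B^{-1}A\big)$, which vanishes precisely when $1/\epsilon$ is an eigenvalue of $B^{-1}A$; choosing $\epsilon$ with $1/\epsilon$ exceeding the spectral radius of $B^{-1}A$ avoids all such values.)

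There is no real obstacle here: the statement is a routine fact about polynomials in one variable, and the only point that needs a moment's care is that we require $\epsilon$ to be \emph{small} rather than merely nonzero, which is handled by the finiteness of the zero set of a nonzero polynomial. I would also note that the hypothesis $\det(A)=0$ is not used in the argument; it is recorded in the statement only to signal that the naive choice $\epsilon=0$ fails, which is what makes the lemma worth stating in the first place.
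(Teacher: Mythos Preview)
Your argument is correct. The paper does not actually supply a proof of this lemma; it simply records the statement and attributes it to Lemma~6.1 of \cite{grig_2021}. Your polynomial argument is the standard one and is entirely sound: $p(\epsilon)=\det(A-\epsilon B)$ is a polynomial whose top coefficient $(-1)^n\det(B)$ is nonzero, so $p$ is not identically zero and has only finitely many roots, leaving arbitrarily small positive $\epsilon$ with $p(\epsilon)\neq 0$. Your observation that the hypothesis $\det(A)=0$ plays no role in the argument, and that the lemma in fact yields arbitrarily small $\epsilon$ (which is what Theorem~\ref{embedding_thm} actually uses), is also correct and worth keeping.
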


\begin{lemma}
(Lemma 6.2 \cite{grig_2021}) Let $X_1, \ldots , X_n$ be independent real random variables each of which is non singular, and let $p$ be a polynomial in $n$ complex variables, that is not identically zero. Then
    \begin{align*}
        \mathbb{P}[p(X_1, \ldots X_n) = 0] = 0.
    \end{align*}
    \label{non_zero_polynomial_lemma}
\end{lemma}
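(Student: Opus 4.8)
The plan is to prove the statement by induction on the number of variables $n$, peeling off one variable at a time and reducing ultimately to the fact that a nonzero univariate polynomial has only finitely many roots. Two preliminary reductions set things up. First, I would reduce to polynomials with real coefficients: writing $p = p_R + i p_I$ with $p_R, p_I$ real polynomials, at least one of which is not identically zero, the real zero set $\{x \in \mathbb{R}^n : p(x) = 0\}$ is contained in the zero set of whichever of $p_R, p_I$ is nonzero (equivalently one may replace $p$ by $p\bar p$, which has real coefficients and the same real zero locus). Second, I would fix the reading of ``non-singular'': a real random variable $X$ is non-singular when its law is atomless, i.e. $\mathbb{P}[X = a] = 0$ for every $a \in \mathbb{R}$ (this is weaker than absolute continuity, which matters below).

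For the base case $n = 1$: a nonzero univariate polynomial $p$ has at most $\deg p$ distinct complex roots, of which only the real ones, say $r_1, \dots, r_k$, can be attained by the real random variable $X_1$; hence $\{p(X_1) = 0\} = \bigcup_{j=1}^k \{X_1 = r_j\}$ and $\mathbb{P}[p(X_1) = 0] \le \sum_{j=1}^k \mathbb{P}[X_1 = r_j] = 0$ by non-singularity.

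For the inductive step, assume the claim for $n-1$ and let $p \not\equiv 0$ be a polynomial in $n$ variables. If $p$ is constant the claim is trivial; otherwise $p$ has positive degree in some variable, and after relabelling we may assume it has degree $d \ge 1$ in $x_n$, so $p(x_1,\dots,x_n) = \sum_{j=0}^d q_j(x_1,\dots,x_{n-1})\, x_n^j$ with $q_d \not\equiv 0$. Let $\mu$ be the joint law of $(X_1,\dots,X_{n-1})$ and $\nu$ the law of $X_n$; by independence the joint law of $(X_1,\dots,X_n)$ is $\mu \otimes \nu$, so by Fubini
\begin{align*}
    \mathbb{P}[p(X_1,\dots,X_n) = 0] = \int_{\mathbb{R}^{n-1}} \Big( \int_{\mathbb{R}} \mathbf{1}\{p(y,t) = 0\}\, d\nu(t) \Big)\, d\mu(y).
\end{align*}
The polynomial $q_d$ is a nonzero polynomial in $n-1$ variables, and $X_1,\dots,X_{n-1}$ are independent and non-singular, so by the inductive hypothesis $q_d(y) \ne 0$ for $\mu$-almost every $y$; for each such $y$ the map $t \mapsto p(y,t)$ is a nonzero univariate polynomial whose zero set is finite, whence the inner integral is $0$ because $\nu$ is atomless. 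Therefore the outer integral vanishes and the induction is complete.

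The argument is essentially routine; the points that need care are the reduction to real coefficients, the correct (atomless) reading of ``non-singular'' — note that the joint law of the $X_i$ need not be absolutely continuous, so one cannot simply invoke that the zero set of $p$ has Lebesgue measure zero — and the Fubini bookkeeping. The only genuine obstacle is ensuring that the coefficient $q_d$ really is a \emph{nonzero} polynomial in $n-1$ variables so the inductive hypothesis applies, which is exactly why one first arranges $d \ge 1$ by relabelling the variables.
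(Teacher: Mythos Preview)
Your proof is correct. Note, however, that the paper does not supply its own proof of this lemma: it is quoted verbatim as Lemma 6.2 of \cite{grig_2021} and used as a black box, so there is no in-paper argument to compare against. Your induction on $n$ via Fubini, with the leading-coefficient polynomial $q_d$ carrying the inductive hypothesis, is the standard route and matches what one finds in the cited reference. The two clarifications you flag---reducing to real coefficients and reading ``non-singular'' as atomless rather than absolutely continuous---are both appropriate; the atomless reading is exactly what the argument needs, since at every stage one only requires that finite sets have measure zero under the relevant marginal.
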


\begin{lemma}
(Lemma 6.3 \cite{grig_2021}) Let $A$ be a real symmetric positive definite $N \times N$ matrix and $C$ a random vector in $\mathbb{R}^N$ and assume the entries of $C$ and the upper triangular entries of $A$ are stochastically independent and each of them is non-singular. Moreover, let $p_1 , \ldots , p_n \in \mathbb{C}[x]$ be linearly independent polynomials in one variable of degree at most $n-1$. Then
    \begin{align*}
        \mathbb{P}(\text{det}[p_1(A)C , p_2(A)C , \ldots , p_n(A)C] = 0) = 0,
    \end{align*}
    i.e the vectors $p_1(A)C , p_2(A)C , \ldots , p_n(A)C$ are linearly independent almost surely.
    \label{poly_lemma}
\end{lemma}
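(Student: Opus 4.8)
The plan is to follow the standard template for statements of this kind: write the quantity whose vanishing we wish to rule out as a polynomial in the stochastically independent, non-singular random variables in play, and then invoke Lemma \ref{non_zero_polynomial_lemma}, so that the whole problem collapses to exhibiting one deterministic choice of $A$ and $C$ for which that polynomial is nonzero. Here the free coordinates are the $N(N+1)/2$ upper-triangular entries of the symmetric matrix $A$ (its lower triangle being determined by symmetry) together with the $N$ entries of $C$, and these are exactly the random variables to which Lemma \ref{non_zero_polynomial_lemma} is to be applied.

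First I would fix, say, the minor of the $N \times n$ matrix $[\,p_1(A)C,\ldots,p_n(A)C\,]$ built from its first $n$ rows (implicitly $N \ge n$, as otherwise the claimed linear independence is impossible; when $N = n$ this minor is the determinant appearing in the statement). Since each coordinate of $p_k(A)C$ is a polynomial in the upper-triangular entries of $A$ and the entries of $C$, this minor is itself a polynomial in those variables, and a single nonzero value of it forces the columns $p_1(A)C,\ldots,p_n(A)C$ to be linearly independent. By Lemma \ref{non_zero_polynomial_lemma} it therefore suffices to produce one symmetric $A$ and one $C$ at which this minor does not vanish.

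Next I would take the test point $A=\mathrm{diag}(a_1,\ldots,a_N)$ with $a_1,\ldots,a_N$ distinct reals (positive, so $A$ is even positive definite, although that is not needed for this step) and $C=(1,\ldots,1)^{\top}$. Then $p_k(A)C$ has $i$-th entry $p_k(a_i)$, so the chosen minor equals $\det[\,p_k(a_i)\,]_{1\le i,k\le n}$, a generalised Vandermonde determinant. I claim it is nonzero: were it zero, some nontrivial linear combination $p:=\sum_k c_k p_k$ would vanish at the $n$ distinct points $a_1,\ldots,a_n$; but $p$ is not the zero polynomial, since the $p_k$ are linearly independent, and it has degree at most $n-1$, so it cannot have $n$ roots. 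Hence the minor polynomial is not identically zero, and Lemma \ref{non_zero_polynomial_lemma} yields the almost-sure linear independence.

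The bulk of the argument is pure bookkeeping -- identifying the free coordinates of a symmetric matrix, observing that the minor depends polynomially on them, and translating ``some minor is nonzero'' into ``the columns are linearly independent''. The one substantive point is the nonvanishing of the generalised Vandermonde determinant, and that is precisely where the hypothesis $\deg p_k \le n-1$ is used: $n$ linearly independent polynomials of degree at most $n-1$ form a basis of $\mathbb{C}[x]_{<n}$, and a nonzero element of that space has fewer than $n$ roots. I expect no real obstacle beyond getting this root-counting argument stated cleanly.
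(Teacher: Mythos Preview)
Your proposal is correct and is precisely the standard argument one would expect. The paper itself does not supply an independent proof of this lemma: it merely cites Lemma~6.3 of \cite{grig_2021} (stated there for general, not necessarily symmetric, $A$) and observes that the same proof goes through when $A$ is symmetric positive definite with the free parameters being the $N(N+1)/2$ upper-triangular entries. What you have written is exactly that proof spelled out in the symmetric setting --- reduce to showing a single $n\times n$ minor is a nonzero polynomial in the independent non-singular coordinates, invoke Lemma~\ref{non_zero_polynomial_lemma}, and witness nonvanishing at a diagonal $A$ and constant $C$ via the generalised Vandermonde/root-counting argument. Your remark that $N\ge n$ is implicit is accurate and is the only minor point the paper leaves unsaid.
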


\begin{proof}
    Lemma 6.3 \cite{grig_2021} is stated for real matrices $A$ that are not in general symmetric positive definite, but the same proof goes through in the symmetric positive definite case.
\end{proof}

\end{document}